\definecolor{mygreen}{rgb}{0,0.2,0}
\definecolor{mygray}{rgb}{0.95,0.95,0.95}
\definecolor{mymauve}{rgb}{0.58,0,0.82}
\definecolor{lbcolor}{rgb}{0.95,0.95,0.95}
\begin{document}

\title{Teaching algebraic curves for gifted learners at age 11 by using LEGO linkages and GeoGebra}%\\2019-09-04 --- Draft 7}
\author{Zolt\'an Kov\'acs\orcidID{0000-0003-2512-5793}}
\institute{
The Private University College of Education of the Diocese of Linz\\
Salesianumweg 3, A-4020 Linz, Austria\\
\email{zoltan@geogebra.org}
}

\maketitle              % typeset the title of the contribution

\begin{abstract}
A summary of an experimental course on algebraic curves is given
that was held for young learners at age 11. The course was a part
of Epsilon camp, a program designed for very gifted students who have
already demonstrated high interest in studying mathematics. Prerequisites for
the course were mastery of Algebra I and at least one preliminary year in a prior
Epsilon camp. The summary gives an overview of the flow of teaching,
the achieved results and some evaluation of the given feedback.

\keywords{LEGO linkages, algebraic curves, automated reasoning in geometry, non-degeneracy conditions,
locus equation, GeoGebra}

\end{abstract}
\section{Introduction}

Epsilon Camp (\url{https://epsiloncamp.org}) is an annual meeting of young learners in the United States that has been held
at different campus locations since 2011 for two weeks during summer vacation.
It was founded by George Reuben Thomas
who is currently the executive director of the camp. The camp program is organized
by an academic director who follows the mission statement of the camp. In a nutshell,
the young learners are divided into groups that reflect their age and preliminary skills.

Campers are placed into groups of 10--12 campers with
a dedicated counselor, who is usually a university student in mathematics. Camper groups within each level do not imply
any difference in ability. Camper levels are currently as follows:

\begin{enumerate}
        \item Pythagoras Level: 7- and 8-year-olds.
        \item Euclid Level: 9- to 11-year-olds who are new to camp or 9- to 11-year-olds who have previously completed any part of Pythagoras Level.
        \item Gauss Level: 10- to 12-year-olds who have completed the Euclid Level.
        \item Conway Level: 11- to 12-year-olds who have completed the Gauss Level.
\end{enumerate}

Typical curriculum topics for campers have included introductory topics to advanced mathematics,
including number theory, methods of proofs, voting theory, set theory, Euclidean
geometry, projective geometry, or hyperbolic geometry, among others. These topics
can vary from year to year, but the basic concept is that for higher levels some
non-introductory topics are also included.

In this paper we sketch up the flow of a course given at Epsilon Camp 2019,
introduced for the Gauss Level, held 14--28 July 2019 at University of Colorado, Colorado Springs.

\section{Course description}
Eleven students attended the course: 4 girls and 7 boys. The youngest learner was 10 and
the oldest 12 years old.

The course consisted of 10 classes, 80 minutes each. The students were taught
in a university classroom with whiteboard and projector access. The students
worked only with paper and pencil, and a LEGO construction set that is
described at \cite{gh-lego-linkages}. Using calculators for the students in the camp was discouraged,
however in this course at some points it was still allowed.

The basic idea of the course was to explain some basic concepts of algebraic curves,
including classification of degree 1 and 2 curves, and identifying higher degree
curves that can be drawn by planar linkages. The main research question was
to learn whether Watt's linkage could produce a straight linear motion or not, and if not,
could the situation be improved. To achieve this goal, the learners did experiments
in multivariate polynomial division with respect to various term orderings,
and performed factorizations over the rationals. The basic concept of
the Nullstellensatz \cite{nullstellensatz} was presented and proven for linear factors.
The idea of eliminating variables from a system of algebraic equations was
shown with a concrete example by using a web user interface of the Singular \cite{Singular}
computer algebra system to illustrate Buchberger's algorithm \cite{gb-en}.

The mathematical relationship between the structure of a LEGO linkage and an algebraic
equation system was then explained. By
using elimination in GeoGebra \cite{gg5}, the motion produced by the linkage was computed
with computer use. The need to avoid degeneracy and a possible workaround was
discussed by understanding Rabinowitsch's trick \cite{Rabinowitsch1929}on defining non-equalities
to describe Agnesi's witch. Finally, after introducing inversion, Peaucellier's linkage
(see, e.g.~\cite{Kempe1877}) was studied, and the conclusion was made that it indeed produces a straight line motion.

The course was highly inspired by the book \cite{bryantsangwin} and used several
concepts from its chapter ``How to draw a straight line?''. Preliminary ideas
from the papers \cite{lego,ucmlego,jkulego,vmt2,JSC-linkages,aplimat-inv,lego-steam} were extensively used
during the course.

\section{The used didactic method}
Epsilon Camp's mission statement discouraged teaching
in a definition--theorem--proof style. Instead, a real life problem (namely, ``Can Watt's
linkage produce a straight line motion?'') was introduced, and the underlying theory
was built with extensive help of the students. The students' natural curiosity to learn
what kind of curve is generated from a given formula and vice versa, led to
natural discovery of new knowledge. The learning process was facilitated through
the Socratic method by the teacher: many questions were raised in order to
follow the expected line of reasoning.

Homework helped to deepen knowledge or to prepare next day's activity. Since each
group of children was learning three different subjects and the available time
for doing homework was 70 minutes per day, it was not expected that students
would spend more than 20 minutes to solve all homework exercises. On the other hand,
some students used some extra time to finalize their assignments and turned them
in---sometimes with standards of high quality. %% see Allison's homeworks

Some assignments were purely mathematical; others were about to build a LEGO construction
and draw the geometric curve it produces. One homework was about plotting a graph
of a two-variate polynomial.

\section{The course in detail}

In this section a detailed description of the course follows. A daily step-by-step
explanation of the topic is presented. 

\subsection*{Day 1}

The main problem setting about Watt's linkage was given, by building a compass
first and then Watt's linkage. (See \cite{gh-lego-linkages}, linkages
\texttt{compass} and \texttt{Watt}.)

A general survey of the students' knowledge was performed. It turned out that
the students were familiar with linear equations and various forms of them,
and they knew how to plot linear functions and read off an algebraic form
of a line if two points of it is given.

A challenge was given to students to try to identify quadratic curves.
The equation of a parabola in form $y=ax^2+bx+c$ was well-known.
The equation of a circle was also known, and some possible algebraic forms
of a hyperbola were discussed. As homework, further types of quadratic curves
were to be identified (see Fig.~\ref{hw-day1} for a turned-in homework).

\begin{figure}
\begin{center}
\includegraphics[width=0.5\textwidth]{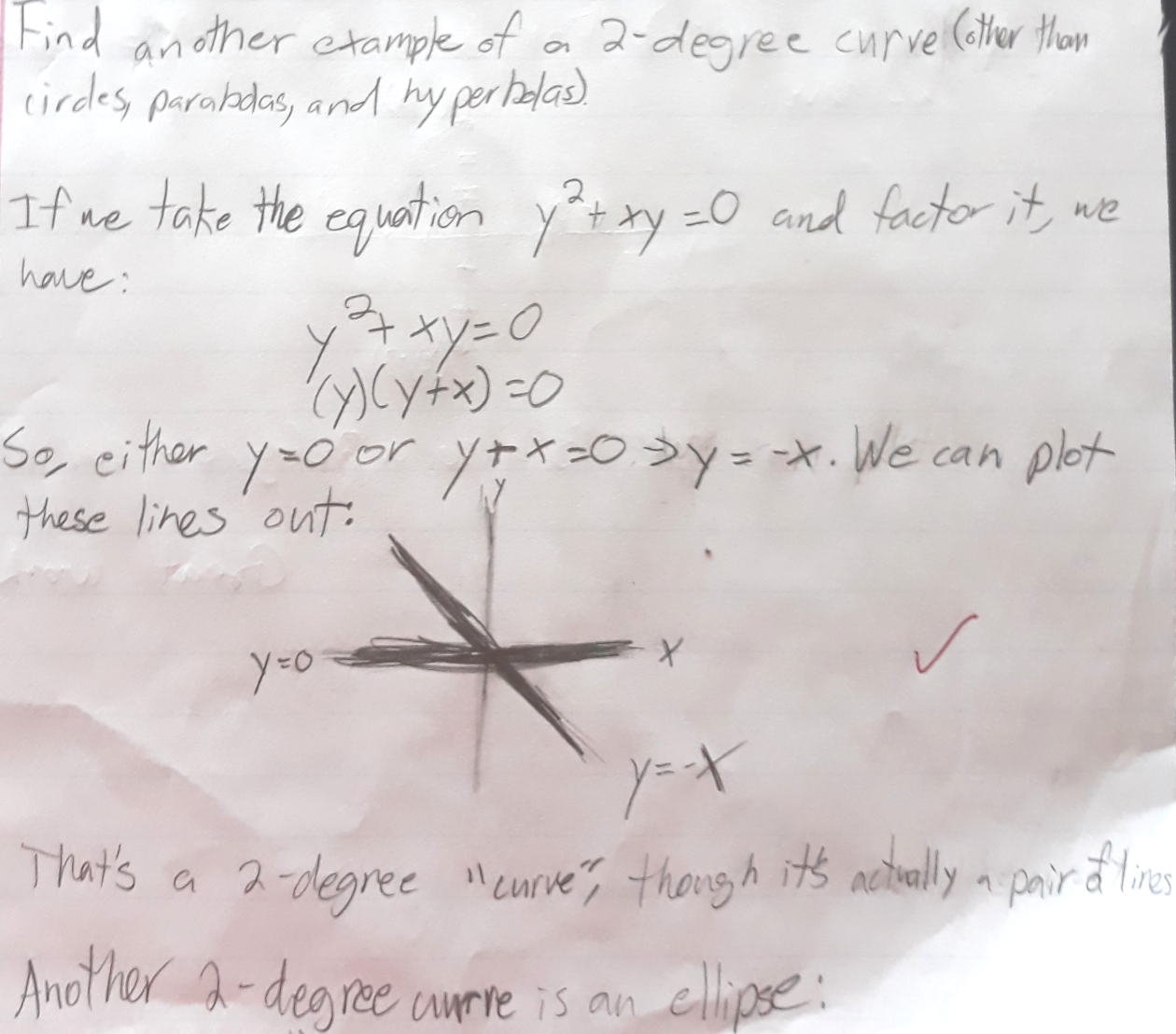}
\end{center}
\caption{A part of a student's worked-out homework for Day 1}
\label{hw-day1}
\end{figure}

\subsection*{Day 2}

A classification theorem of quadratic curves was stated (without proof):

\begin{theorem}\label{quad-class}
An equation of form $ax^2+by^2+cxy+dx+ey+f=0$ corresponds to either
a parabola, a circle, a hyperbola\footnote{To visualize a hyperbola
the GeoGebra applet \url{https://www.geogebra.org/m/AGT66pre} was used.}, an ellipse, two lines, a point
or the empty set.
\end{theorem}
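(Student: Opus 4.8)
The plan is to reduce the general second-degree equation to a normal form by an orthogonal change of coordinates followed by a translation, and then to read off the possible solution sets directly. Since rotations and translations are isometries of the plane, they carry each type in the list (parabola, circle, hyperbola, ellipse, pair of lines, point, empty set) to a set of the same type; hence it suffices to classify the solution set of the transformed equation. It is convenient to let the sign of the discriminant $c^2-4ab$ of the quadratic part organise the three principal cases, since this quantity turns out to be invariant under the reduction.

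First one rotates the coordinate axes by an angle $\theta$, writing $x = x'\cos\theta - y'\sin\theta$ and $y = x'\sin\theta + y'\cos\theta$. The coefficient of the mixed term $x'y'$ then becomes a fixed linear combination of $a-b$ and $c$; choosing $\theta$ with $\cot 2\theta = (a-b)/c$ (and $\theta = \pi/4$ when $a=b$) makes it vanish, and a short computation shows $c^2-4ab$ is unchanged. After the rotation the equation has the form $a'x'^2 + b'y'^2 + d'x' + e'y' + f' = 0$ with no cross term.

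Next one completes the square in each variable whose square actually occurs, absorbing the resulting shifts into a translation of the origin. This yields one of: (i) $a'X^2 + b'Y^2 = g$ with $a',b'\neq 0$; (ii) $a'X^2 + e'Y = g$ with $a'\neq 0$; (iii) $a'X^2 = g$ with $a'\neq 0$; or the purely linear case $d'X + e'Y = g$. A short case analysis finishes the argument. In case (i), if $a'$ and $b'$ have the same sign the curve is an ellipse (a circle precisely when $|a'|=|b'|$), a single point ($g=0$), or empty ($g$ of the opposite sign); if they have opposite signs it is a hyperbola ($g\neq 0$) or a pair of intersecting lines ($g=0$). In case (ii) with $e'\neq 0$ it is a parabola; if $e'=0$ one falls back to case (iii). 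Case (iii) gives two parallel lines, one line, or the empty set according to the sign of $g/a'$. The linear case gives a line (read as a degenerate ``two lines''), or, when every coefficient vanishes, the whole plane — which strictly speaking should be excluded by hypothesis or added to the list.

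The step I expect to be the main obstacle is the rotation: one must verify carefully that the mixed term can always be eliminated, treat the $a=b$ case separately, and — more importantly for a rigorous write-up — justify that the classification is a genuine invariant, i.e.\ that two equations related by an isometry, or by scaling by a nonzero constant, define curves of the same type. Exhausting the degenerate sub-cases of (i)--(iii) so that exactly the seven alternatives of the statement appear is routine but must be done without omission.
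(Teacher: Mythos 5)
The paper does not actually prove this statement: it is explicitly ``stated (without proof)'' to the class, and the only refinement made in the text is the added hypothesis that $a\neq0$ or $b\neq0$ or $c\neq0$, which disposes of your purely linear case. So there is no proof in the paper to compare against; your argument is the standard one and is essentially correct. The reduction (rotate by $\theta$ with $\cot2\theta=(a-b)/c$ to kill the cross term, check the invariance of $c^2-4ab$, complete squares, then exhaust cases) is sound, and your case analysis is exhaustive. The one point worth making explicit is that your subcase (iii) with $g=0$, e.g.\ $X^2=0$, yields a single (double) line, which is not literally among the seven alternatives of the theorem as stated unless one reads ``two lines'' as allowing coincident lines; similarly ``two lines'' must be understood to cover both the intersecting pair from case (i) and the parallel pair from case (iii). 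These are defects of the theorem's phrasing rather than of your proof, and you correctly flag them.
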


After some discussion, the prerequisite $a\neq0$ or $b\neq0$ or $c\neq0$ was added.

Without giving a proof, a GeoGebra applet \cite{conicsections-irina} was
used to explain that the set of conic sections is the same as the set of quadratics.

We learned how difficult the factorization of a two-variate quadratic polynomial
can be, after computing the expansion of a product of two linear two-variate polynomials.
As homework a factorization exercise was given. It was rather challenging---two students
found the factors by solving an equation system of the coefficients in a combinatorial
way (see Fig.~\ref{hw-day2-jon} and \ref{hw-day2-noah} on a short and an explained
solution by the students), while another possible solution was discussed by finding some points on the
curve explicitly and then attempting to find lines that lie on those points (again,
combinatorially, see Fig.~\ref{hw-day2}).

\begin{figure}
\begin{center}
\includegraphics[width=0.5\textwidth]{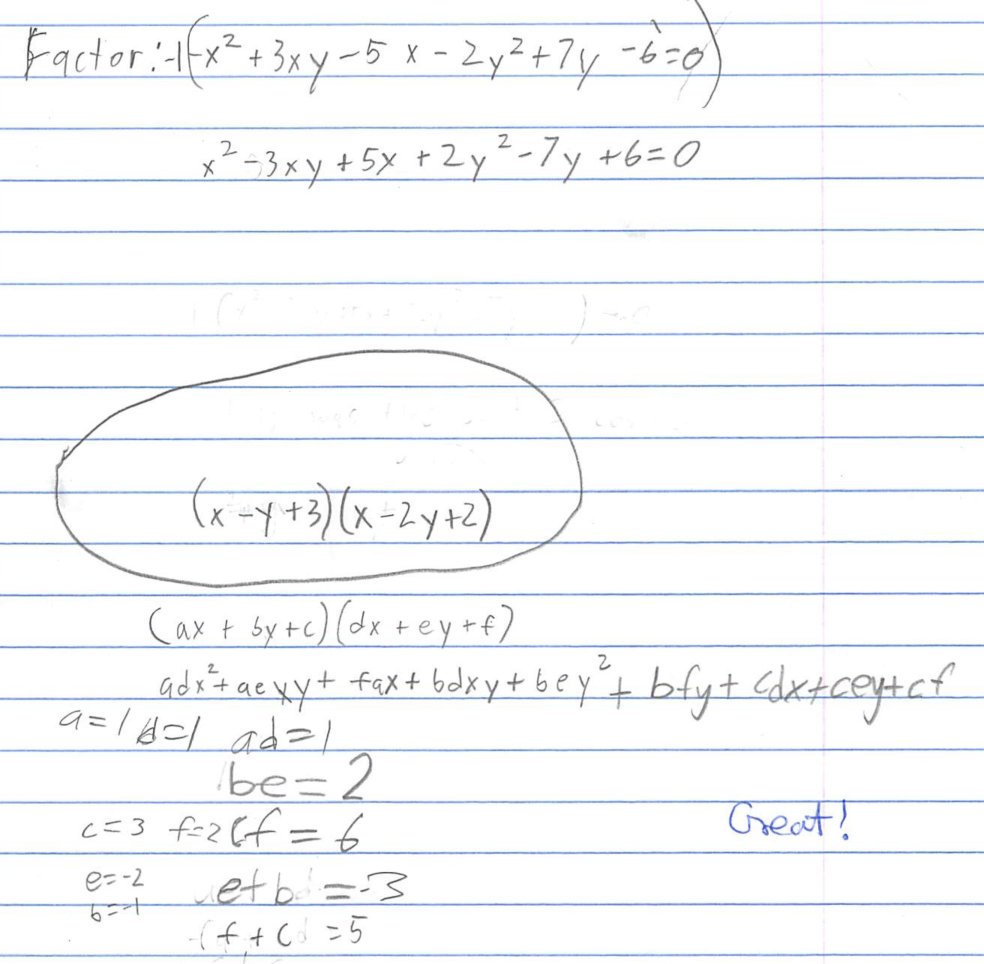}
\end{center}
\caption{A student's homework for Day 2}
\label{hw-day2-jon}
\end{figure}

\begin{figure}
\begin{center}
\includegraphics[width=0.7\textwidth]{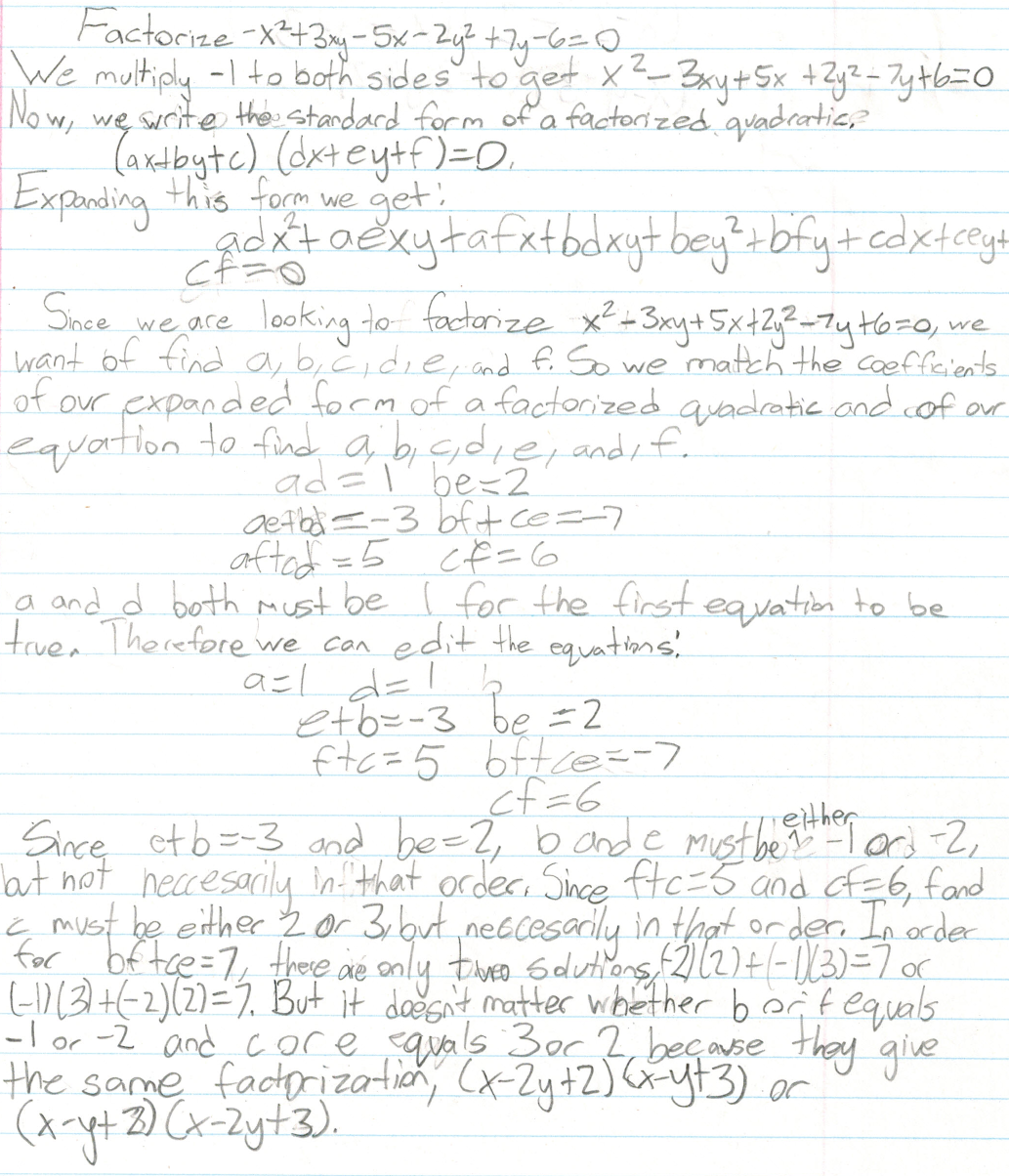}
\end{center}
\caption{A part of another student's homework for Day 2---the full explanation was provided later
on the teacher's request}
\label{hw-day2-noah}
\end{figure}

\begin{figure}
\begin{center}
\includegraphics[width=0.5\textwidth]{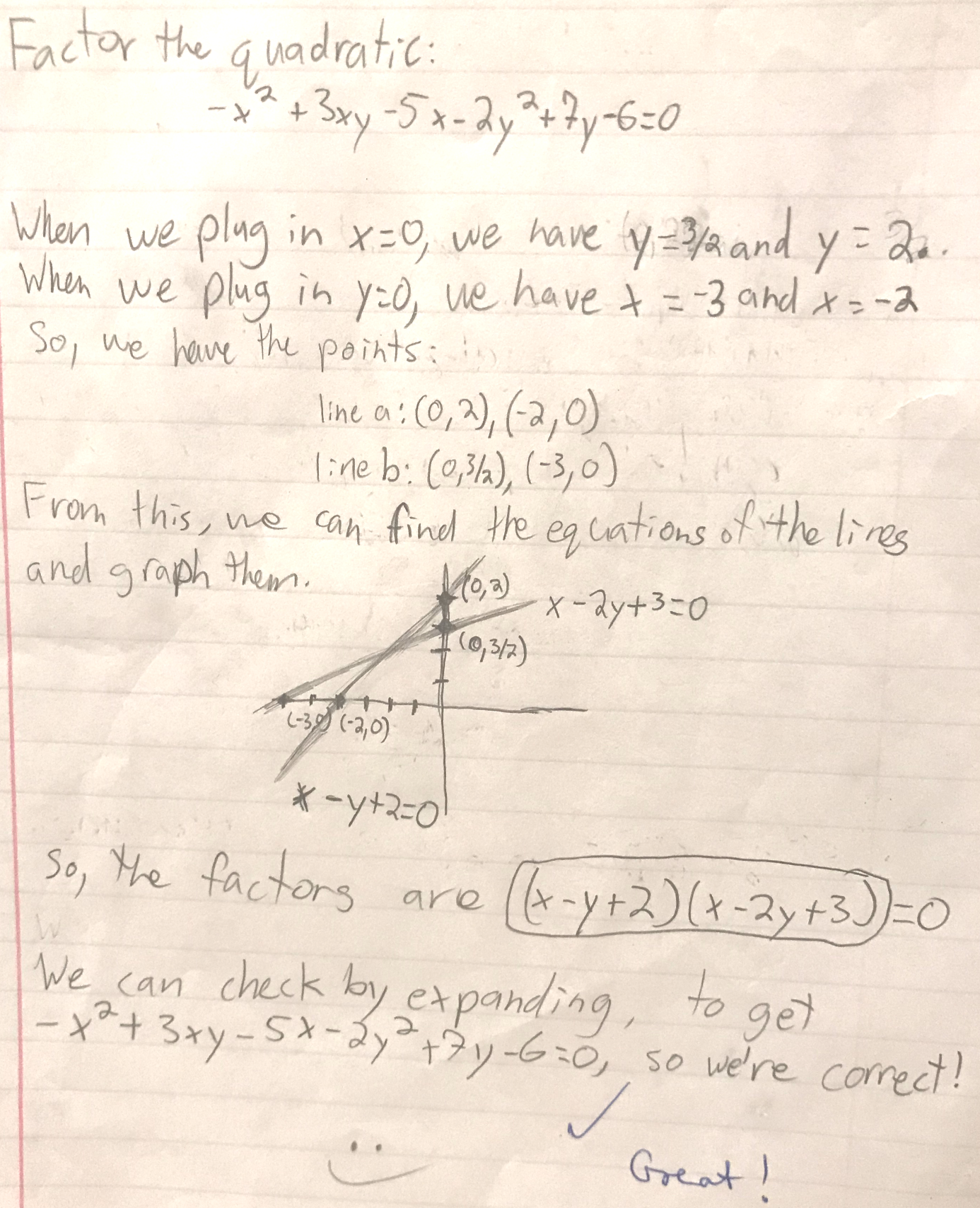}
\end{center}
\caption{Another student's homework for Day 2}
\label{hw-day2}
\end{figure}

The Zero Product Theorem was implicitly mentioned:
\begin{theorem}\label{th-factors}
Given two two-variate polynomials $p$ and $q$. Then the curve $p\cdot q=0$ is a union
of the curves $p=0$ and $q=0$.
\end{theorem}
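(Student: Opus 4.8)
The plan is to read the statement as an equality of point sets in the plane,
\[
\{(x,y): p(x,y)\cdot q(x,y)=0\} \;=\; \{(x,y): p(x,y)=0\}\cup\{(x,y): q(x,y)=0\},
\]
and to prove it by establishing the two inclusions separately. The inclusion $\supseteq$ is immediate: if a point $(x_0,y_0)$ satisfies $p(x_0,y_0)=0$, then $p(x_0,y_0)\cdot q(x_0,y_0)=0\cdot q(x_0,y_0)=0$, so the point lies on the product curve, and the case $q(x_0,y_0)=0$ is symmetric. This direction uses nothing beyond the arithmetic of real numbers.

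For the inclusion $\subseteq$ I would take a point $(x_0,y_0)$ with $p(x_0,y_0)\cdot q(x_0,y_0)=0$ and set $a:=p(x_0,y_0)$ and $b:=q(x_0,y_0)$, two real numbers with $ab=0$. The key step is the zero-product property of $\mathbb{R}$: if $a\neq 0$, multiplying $ab=0$ by $a^{-1}$ forces $b=0$. Hence $a=0$ or $b=0$, that is, $(x_0,y_0)$ lies on $p=0$ or on $q=0$; combining this with the reverse inclusion gives the claim.

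The only step that really deserves comment---and in the classroom it is exactly the ``Zero Product Theorem'' for numbers that the students already know---is that zero-product property: it is the sole place where we use that we compute over a field (more generally, an integral domain) rather than an arbitrary ring. It is also worth flagging the conceptual point that the statement concerns the real solution set, not factorization in $\mathbb{R}[x,y]$; it holds even when $p$ and $q$ are otherwise unrelated, precisely because membership in either side is decided pointwise by the numerical zero-product law.
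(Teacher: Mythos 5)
Your proof is correct and complete: the two set inclusions are established cleanly, and you correctly isolate the zero-product property of $\mathbb{R}$ (an integral domain) as the one substantive ingredient. The paper itself states this theorem without any proof --- it was only ``implicitly mentioned'' in the course --- so there is nothing to compare against; your argument is exactly the intended justification, and your closing remark that the statement is about real solution sets rather than factorization in $\mathbb{R}[x,y]$ is a worthwhile clarification.
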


\subsection*{Day 3}

The graph of some cubic polynomials was shown by using \url{https://en.wikipedia.org/wiki/Cubic_plane_curve}.

Some integer points of the curve $x^3+y^2-17=0$ were found. As homework, a graph of
the curve was to be plotted. An open question was whether cubic curves can be split to
factors of lower degree curves. For this purpose, multivariate long division was
introduced---the students already had some knowledge on dividing polynomials with remainders.

As a final fun activity students named various polynomials and they were plotted by GeoGebra
on the projector. A difference was made which curves are not algebraic.

\subsection*{Day 4}

Four different term orderings were introduced (lexicographical, reversed lexicographical,
degree lexicographical and degree reverse lexicographical) for two-variate polynomials.
A definition of admissible ordering was given, but not directly used in the rest of the course.
A challenge question was however given to show that the lexicographical term ordering
is admissible.

For example, lexicographical term ordering was explained by using Fig.~\ref{termordering}.

\begin{figure}
\begin{center}
\includegraphics[width=0.3\textwidth]{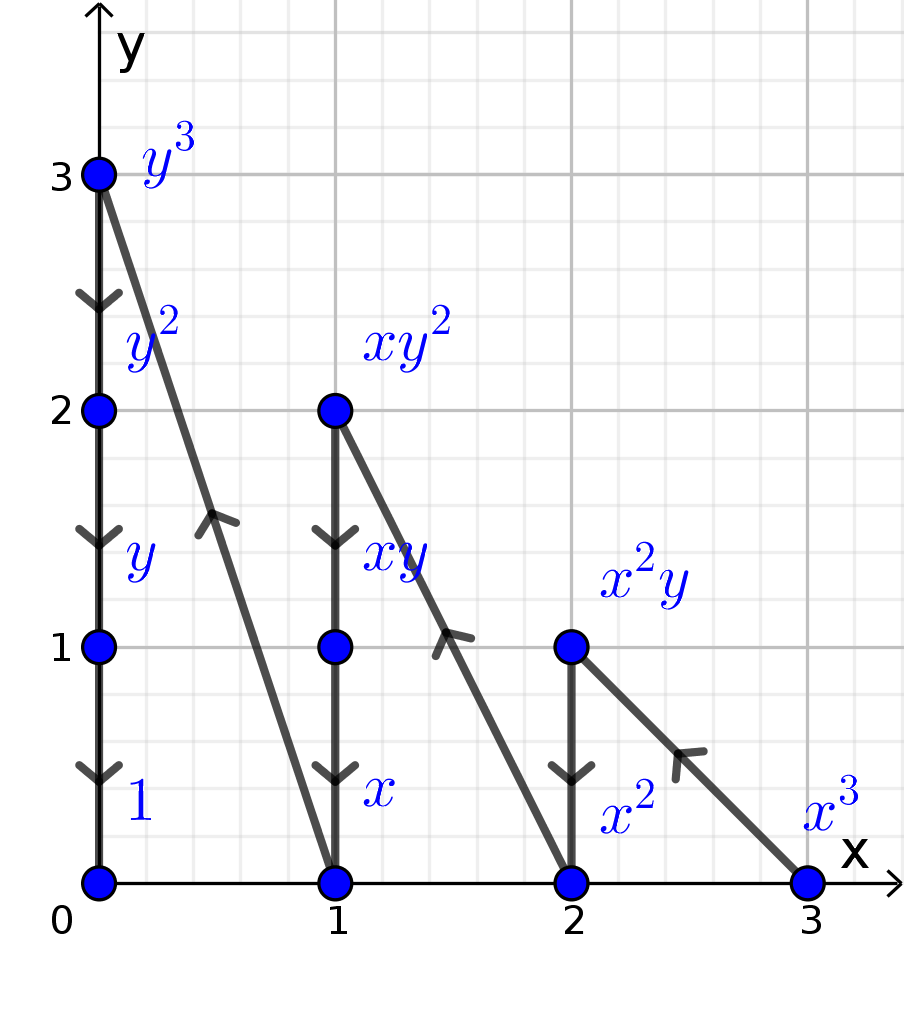}
\end{center}
\caption{Lexicographical term ordering for cubic two-variate polynomials}
\label{termordering}
\end{figure}

As homework two polynomial divisions were to be solved (see Fig.~\ref{hw-day4}).

\begin{figure}
\begin{center}
\includegraphics[width=0.7\textwidth]{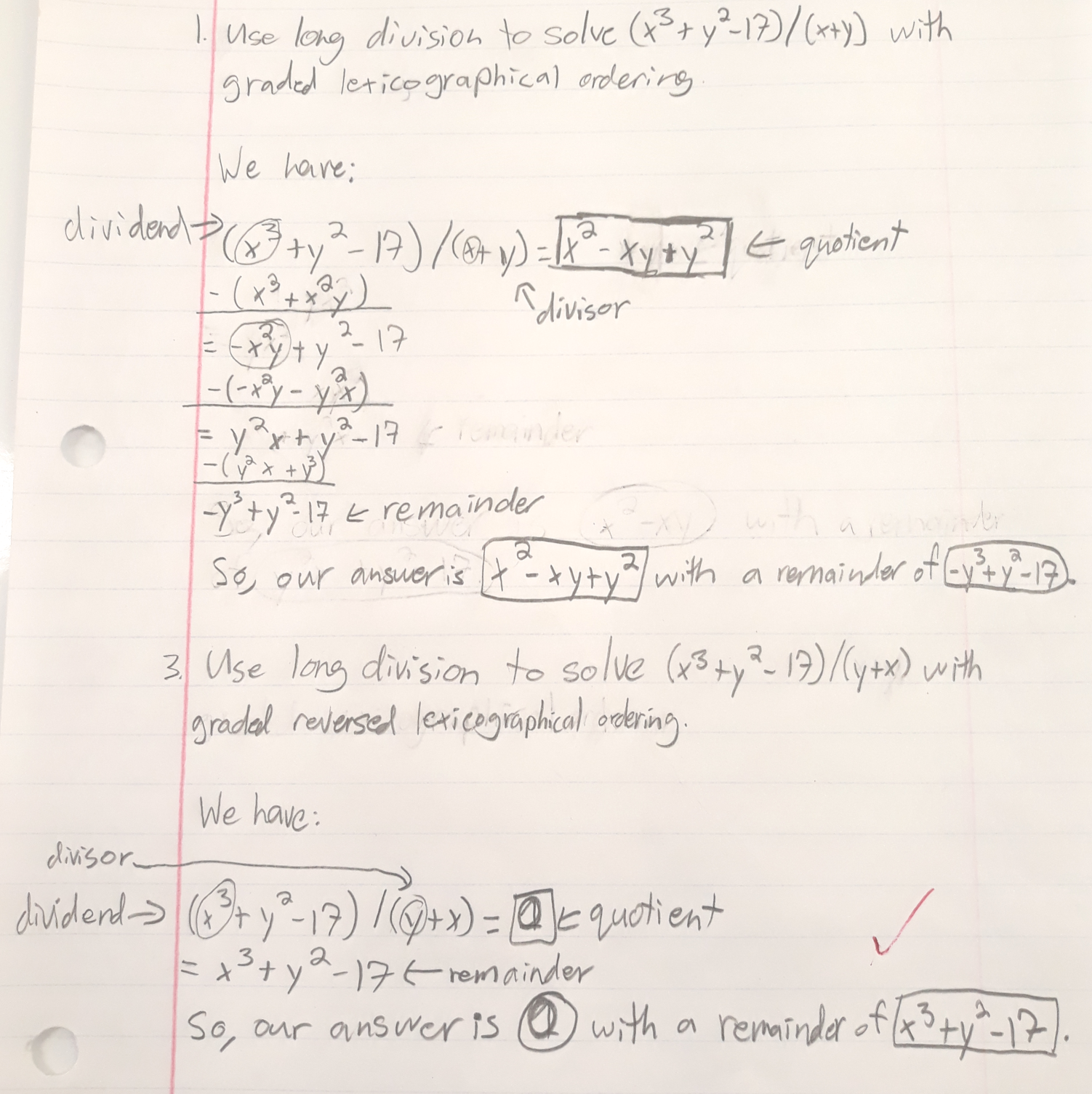}
\end{center}
\caption{A part of a student's completed homework for Day 4, in which the student learned that
the divisor is 0 in the second exercise and corrected the wrong answer
just in the moment of submission}
\label{hw-day4}
\end{figure}

\subsection*{Day 5}

Using Theorem \ref{quad-class} the quadratic polynomials were classified as
``atomic'' (irreducible) and ``non-atomic'' (reducible). It was highlighted
that $x^2-2y^2$ can be both if only rational coefficients or also irrationals
are allowed.

A student proof was given that the curve $x^3+y^2-17=0$ is atomic, based
on the fact that the sum of the degrees of factors is the degree of the curve,
and no linear factor can exist because if $x$ is big enough, the curve does
not include any points---and a vertical line is not included, either.

Several quartic polynomials were created by multiplying some linear and quadratic
components. The union of four lines $x=\pm1$ and $y=\pm1$ was given as homework
to get a degree 4 polynomial that describes all of them. A student question
was raised if the edge of the square $\{(x,y):|x|\leq1,\ |y|\leq1\}$
can be defined as a polynomial curve.

As a challenge homework it was asked how many intersection points exist if
we consider the curve $x^3+y^2-17=0$ and the line $x+y=0$. Also, a generalization
of this question was asked for an arbitrary cubic curve and an arbitrary line.
To prepare for a general answer a theorem was recalled:
\begin{theorem}\label{fta-m}
A polynomial in one variable of degree $n>0$ can have at most $n$ different roots.
\end{theorem}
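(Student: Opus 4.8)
The plan is to argue by induction on the degree $n$, using polynomial long division with remainder — a tool the students have already practised on Days 3 and 4 — to peel off one root at a time.

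First I would record the \emph{factor theorem}: if $p(x)$ has degree $n>0$ and $p(r)=0$ for some number $r$, then $p(x)=(x-r)\,q(x)$ for some polynomial $q(x)$ of degree $n-1$. This follows by dividing $p(x)$ by $x-r$ with remainder; since the divisor has degree $1$, the remainder is a constant $c$, so $p(x)=(x-r)\,q(x)+c$, and substituting $x=r$ gives $c=p(r)=0$.

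Next comes the induction. For the base case $n=1$, a polynomial $ax+b$ with $a\neq 0$ has exactly the single root $-b/a$, hence at most one root. For the inductive step, assume the statement for degree $n-1$ and let $p$ have degree $n$. If $p$ has no root we are done. Otherwise pick a root $r$ and write $p(x)=(x-r)\,q(x)$ with $\deg q=n-1$ by the factor theorem. If $s$ is any root of $p$ with $s\neq r$, then $0=p(s)=(s-r)\,q(s)$; since $s-r\neq 0$, we conclude $q(s)=0$. Thus every root of $p$ other than $r$ is a root of $q$, and by the inductive hypothesis $q$ has at most $n-1$ distinct roots; therefore $p$ has at most $n$ distinct roots.

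The one place that genuinely needs care — and the natural point to pause with the students — is the implication $(s-r)\,q(s)=0\Rightarrow q(s)=0$: it uses that the coefficients live in the rationals (or reals), where a product of two numbers vanishes only if one of the factors does, which is the numerical counterpart of Theorem~\ref{th-factors}. Everything else is bookkeeping that the earlier work on division with remainder has already prepared.
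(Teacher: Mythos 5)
Your argument is correct: the factor theorem obtained by division with remainder, followed by induction on the degree, is the standard proof, and you rightly flag that the step $(s-r)\,q(s)=0\Rightarrow q(s)=0$ relies on the absence of zero divisors in the coefficient field. Note, however, that the paper deliberately gives no proof of this theorem at all --- it is recalled as a fact ``already known by the students'' --- so there is no in-paper argument to compare against; your proof would be a natural supplement, and it fits the course's toolkit, since one-variable division with remainder was already a prerequisite and multivariate long division was introduced on Day~3.
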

This theorem was already known by the students. No proof was shown.

\subsection*{Day 6}

At the end of the first week some feedback was given by the students (see Fig.~\ref{adj1}). To spend more time
on the promised topic of LEGO linkages, from this point the lecturer decided to give at least
one linkage per day with building instructions to construct, to gamify the concepts
better.

\begin{figure}
\begin{center}
\includegraphics[width=0.9\textwidth]{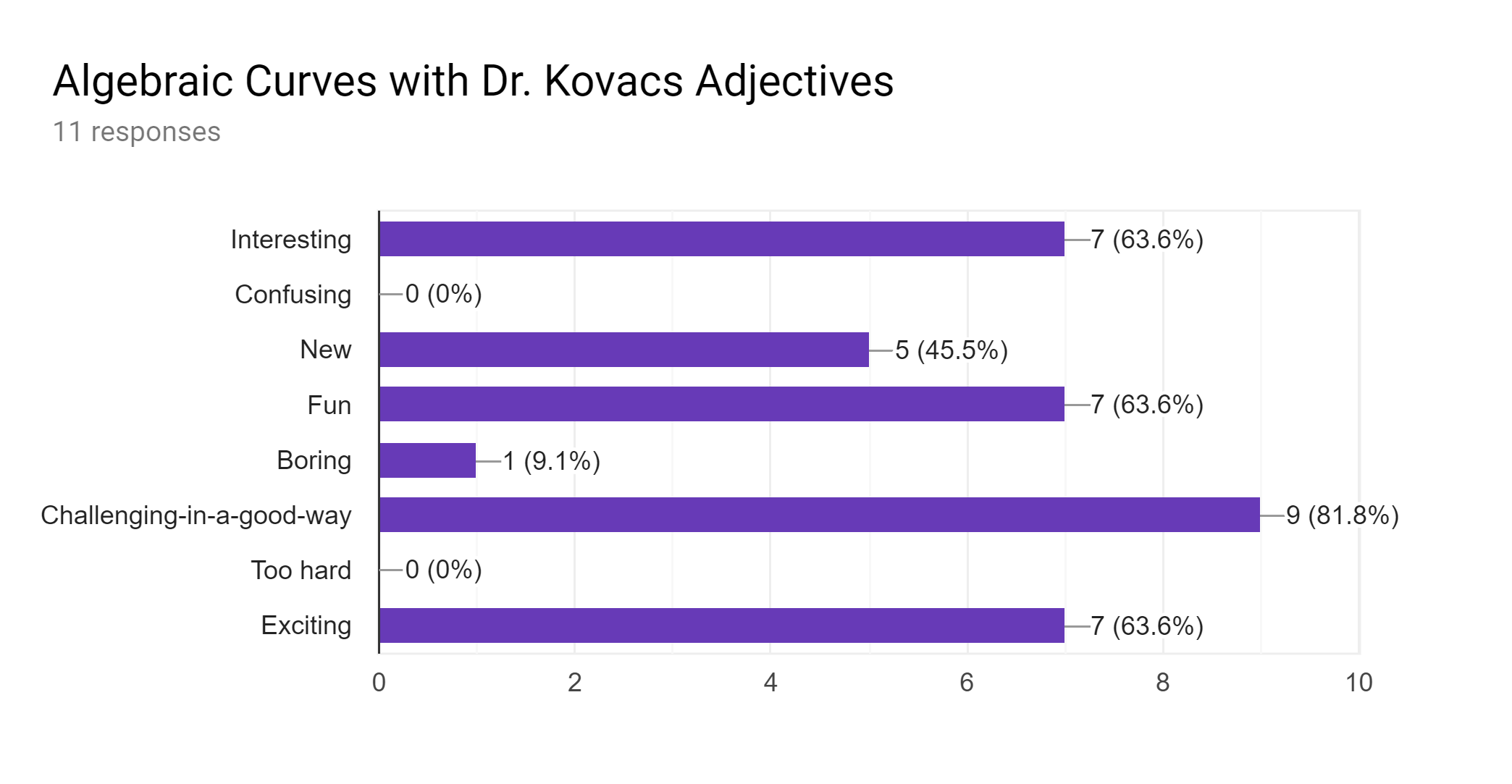}
\end{center}
\caption{Some responses of the students after the first week}
\label{adj1}
\end{figure}

An example of the ``Movement of a bug on a tooth paste box'' (\url{https://www.geogebra.org/m/qe699ja6})
was used to introduce the possibility of flipped bars and its consequence---namely, that the movement
can consist of more atomic movements and the corresponding polynomial can be factored.
(See \cite{gh-lego-linkages}, linkage
\texttt{Parallelogram-8-5} and \texttt{Parallelogram-4-8}; the latter was a homework to construct.)
Also, the problem of the movement of a cat that sits in the middle of a ladder that slides down the wall,
was given as a challenge homework---this question seemed to be difficult to answer: one student tried to
plot the positions of the cat but the graph was inaccurate, and the solution could not be determined.

The challenge question of Day 5 was discussed and a theorem was stated:

\begin{theorem}\label{Allison-sorrow}
If an algebraic curve contains a straight segment, then a whole line must also be included
(that is the lengthened segment).
\end{theorem}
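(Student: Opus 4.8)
The plan is to reduce the two–variable situation to the one–variable statement already available as Theorem~\ref{fta-m}. Let the curve be given by $f(x,y)=0$ for some polynomial $f$, and suppose it contains a straight segment. I would pick two distinct points $P=(x_0,y_0)$ and $Q=(x_1,y_1)$ on that segment and describe the whole line through them in parametric form $\ell(t)=\bigl(x_0+t(x_1-x_0),\,y_0+t(y_1-y_0)\bigr)$ for $t\in\mathbb{R}$; the segment itself then corresponds to $t$ ranging over some interval $[\alpha,\beta]$ with $\alpha<\beta$. Working with a parametrisation rather than the form $y=mx+c$ is precisely what lets the argument also cover vertical segments.

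First I would substitute this parametrisation into $f$, forming the single–variable expression $g(t)=f\bigl(x_0+t(x_1-x_0),\,y_0+t(y_1-y_0)\bigr)$. Because plugging linear functions of $t$ into a polynomial and re-expanding yields a polynomial, $g$ is a polynomial in the one variable $t$, of degree at most $\deg f$. Next, since every point $\ell(t)$ with $t\in[\alpha,\beta]$ lies on the curve, we have $g(t)=0$ for all such $t$; in particular $g$ has infinitely many roots. By Theorem~\ref{fta-m}, a nonzero one–variable polynomial has only finitely many roots, so $g$ must be identically zero. Hence $g(t)=0$ for \emph{every} $t\in\mathbb{R}$, i.e. $f$ vanishes at every point of the line $\ell$, which is exactly the lengthened segment.

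The main obstacle is not a calculation but articulating clearly, at the level of the audience, why ``vanishing on a whole segment'' forces ``zero polynomial'': one must observe that a segment carries infinitely many parameter values and only then invoke the finite–root bound. A secondary point worth handling explicitly is the degenerate case $f\equiv 0$ (the curve is the entire plane, so the claim is trivial) and $f$ a nonzero constant (the curve is empty and so contains no segment), so that ``algebraic curve'' may be taken to arise from a nonconstant $f$ without loss of generality.
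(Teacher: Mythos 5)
Your proof is correct and takes essentially the same approach as the paper: restrict the polynomial to the line by substitution, obtain a univariate polynomial with infinitely many roots, and invoke Theorem~\ref{fta-m} to conclude it vanishes identically. The only difference is that you use a parametric description of the line, which neatly avoids the paper's case split on whether the line is vertical ($a\neq 0$ versus $a=0$).
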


\begin{proof}
Consider a polynomial $p$ that contains a straight segment that is a part of the line $\ell:ax+by+c=0$.
If $a\neq0$, $\ell$ can be described by the equation $x=(-c-by)/a$. Now substitute $x$ by this formula
in $p$, then a polynomial $q$ of $y$ will be obtained. Since $p=0$ holds for infinitely many $y$,
by using Theorem \ref{fta-m} we learn that $q$ must be 0 (otherwise there will be just a finite
number of zeroes). This means that $p=0$ holds for all $y$ and for all $x=(-c-by)/a$ which yields
the statement. The case $a=0$ can be verified analogously.
\end{proof}

To prepare for another theorem, a random quartic polynomial was divided by a linear divisor manually.
The process was slow and the result very complicated---but this step showed the computational
difficulty of long division in general. For checking the result GeoGebra's \texttt{Division}
command was used that outputs the quotient and the remainder in a list of two elements.

\subsection*{Day 7}

The solution of the sliding ladder problem \cite{ph,Hasek2018} was discussed. 
As homework, a question was raised about what would happen if a cat sat at one-third the way up a
the ladder. A student created a Python program (Fig.~\ref{py}) that solves the question graphically.
This question was later discussed in detail by using a GeoGebra applet
at \url{https://www.geogebra.org/m/dqgnvwn6}.

\begin{figure}
\begin{center}
\begin{lstlisting}
import turtle
import time
import math

screen = turtle.Screen()

cat = turtle.Turtle()
cat.shape("turtle")
cat.color("black")

def set_up():
  cat.speed(0)
  cat.goto(200,0)
  cat.goto(-200,0)
  cat.goto(0,0)
  cat.goto(0,-200)
  cat.goto(0,200)

set_up()
xcor = 0
add = 5

for ycor in range(200,-1,-1*add):
  cat.goto(0, ycor)
  cat.pendown()
  cat.goto(xcor/3, 2*ycor/3)
  cat.color("blue")
  cat.dot(4)
  cat.color("red")
  cat.goto(xcor, 0)
  cat.penup()
  xcor = math.sqrt(40000 - ycor*ycor)



print("Wait!")
time.sleep(2.5)
screen.reset()


set_up()
cat.goto(0,400/3)
cat.pensize(5)

accuracy = 1
xcor = 0

for cor in range(200*accuracy,-1,-1):
  ycor = cor/accuracy
  cat.goto(xcor/3, 2*ycor/3)
  cat.color("blue")
  xcor = math.sqrt(40000 - ycor*ycor)

cat.hideturtle()
print("Meow!")
\end{lstlisting}
\end{center}
\caption{A student's Python program that shows the locus of a cat while sitting on a sliding ladder}
\label{py}
\end{figure}

Today's LEGO construction was to build Chebyshev's linkage (see \cite{gh-lego-linkages}, linkage
\texttt{Chebyshev}). Some students conjectured that it produces a circular arc and a straight
line segment (if two bars of it are crossing). The homework was to verify this conjecture
or find a better one.

Some further computer experiments were done on 4-bar linkages, by changing the lengths of
the bars---we used the notion \textit{4-bar linkages} for ``planar linkages that involve two fixed
points and three bars'' (in some sense, the two fixed points lie on a fourth bar,
see \cite[p.~33]{bryantsangwin}).
For example, if a rhombus produced a union of three circles having radii 2, 2 and 4,
its algebraic form would be $ x^6 + y^6 + 3 x^2 y^4 + 3 x^4 y^2 - 12 x^5 - 12 x y^4 - 24 x^3 y^2 + 28 x^4 - 4 y^4 + 24 x^2 y^2 + 96 x^3 + 96 x y^2 - 272 x^2 - 144 y^2 - 192 x + 576 = 0$.
A challenge problem was to find its factors (but this problem was not solved by any students).

\subsection*{Day 8}

A simple special case of Hilbert's Nullstellensatz was proven:

\begin{theorem}\label{nss}
If an algebraic curve contains a straight line, then its polynomial has a linear factor.
\end{theorem}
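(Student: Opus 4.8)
The plan is to run the substitution trick from the proof of Theorem~\ref{Allison-sorrow} "in reverse", combining it with the multivariate long division practised on Day~4. Suppose the curve $p=0$ contains the line $\ell:ax+by+c=0$, and assume first that $a\neq0$; the case $a=0$ (a horizontal line) is identical with the roles of $x$ and $y$ exchanged.

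First I would regard $p$ as a polynomial in the single variable $x$ whose coefficients happen to be polynomials in $y$, and likewise view $ax+by+c$ as a degree~$1$ polynomial in $x$ whose leading coefficient is the \emph{nonzero constant} $a$. Because that leading coefficient is an invertible constant, the long division algorithm can be carried out without ever dividing by anything that involves $y$, and it terminates with
\[
p(x,y) = (ax+by+c)\cdot q(x,y) + r(y),
\]
where the remainder has $x$-degree strictly below $1$, hence contains no $x$ at all and is a polynomial in $y$ alone.

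Next I would evaluate both sides along $\ell$. For every value of $y$ the point $\bigl((-c-by)/a,\ y\bigr)$ lies on $\ell$, hence on the curve, so the left-hand side is $0$; on the right-hand side the factor $ax+by+c$ vanishes at that point, leaving exactly $r(y)=0$. Thus the one-variable polynomial $r$ vanishes for infinitely many $y$, so by Theorem~\ref{fta-m} it must be identically zero. Therefore $p=(ax+by+c)\cdot q$, which exhibits the claimed linear factor.

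The one point I expect to need care with — and would spell out explicitly for the students — is why the division yields an honest polynomial quotient and an $x$-free remainder with no "denominators in $y$" creeping in. This is precisely because we are dividing by a polynomial whose leading term in the chosen variable is a constant times that variable, which is exactly the favourable situation they already encountered when dividing by linear divisors; everything after that is the substitution argument they have already seen.
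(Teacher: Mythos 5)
Your proof is correct and follows essentially the same route as the paper's: divide $p$ by $ax+by+c$ so that the remainder is a one-variable polynomial, evaluate along the line, and invoke Theorem~\ref{fta-m} to conclude the remainder vanishes identically. Your extra care about why the division goes through (constant leading coefficient in $x$) and the explicit treatment of the $a=0$ case are welcome refinements of the same argument.
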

\begin{proof}
Let $p$ be the polynomial of the curve. Suppose the line $\ell:ax+by+c=0$ is contained in the curve.
Divide $p$ by $ax+by+c$ with respect to the lexicographical term ordering. Now $p=(ax+by+c)\cdot q+r$
where $r$ is a polynomial of $y$. When plugging all points of $\ell$ in $p$ we always get $0$,
so---by using Theorem \ref{fta-m}---the remainder $r$ must be the constant $0$ polynomial.
That is, $ax+by+c$ is a factor of $p$.
\end{proof}

This is obviously the converse of the Zero Product Theorem (that is, Theorem \ref{th-factors}).

By setting up some equations the homework conjectures were checked by using a new technique for
manipulating a system of equations. First ``the cat on the ladder'' problem was solved by the equations
$a^2+b^2-1=0$, $x-a/3=0$ and $y-2b/3$, which were entered in
the online software tool \cite{gh-buchberger-singular} to obtain the following output:
\vskip12pt

\hrule
\noindent$\vdots\hfill\vdots$
\begin{quote}

The equation system is:
\begin{align*}a^{2}+b^{2}-1&= 0, &&(1)\\-\frac{1}{3}a+x&= 0, &&(2)\\-\frac{2}{3}b+y&= 0. &&(3)\\\end{align*}
Cancelling the leading term of (1) by (2) and continuing reduction:
\begin{align*}a^{2}+b^{2}-1&\underset{(2)}{\rightarrow}3ax+b^{2}-1\\ &\underset{(2)}{\rightarrow}\frac{1}{3}b^{2}+3x^{2}-\frac{1}{3}\\ &\underset{(3)}{\rightarrow}\frac{3}{2}by+9x^{2}-1\\ &\underset{(3)}{\rightarrow}6x^{2}+\frac{3}{2}y^{2}-\frac{2}{3}\sim \\ &\qquad x^{2}+\frac{1}{4}y^{2}-\frac{1}{9}.\\ \end{align*}
Equation added:
\begin{align*}x^{2}+\frac{1}{4}y^{2}-\frac{1}{9}&= 0. &&(4)\\\end{align*}
Cancelling the leading term of (1) by (3) and continuing reduction:
\begin{align*}a^{2}+b^{2}-1&\underset{(3)}{\rightarrow}\frac{3}{2}a^{2}y+b^{3}-b\\ &\underset{(1)}{\rightarrow}\frac{2}{3}b^{3}-b^{2}y-\frac{2}{3}b+y\\ &\underset{(3)}{\rightarrow}-b+\frac{3}{2}y\\ &\underset{(3)}{\rightarrow}0.\\ \end{align*}
Cancelling the leading term of (2) by (3) and continuing reduction:
\begin{align*}-\frac{1}{3}a+x&\underset{(3)}{\rightarrow}\frac{3}{2}ay-3bx\\ &\underset{(2)}{\rightarrow}-2bx+3xy\\ &\underset{(3)}{\rightarrow}0.\\ \end{align*}
Cancelling the leading term of (1) by (4) and continuing reduction:
\begin{align*}a^{2}+b^{2}-1&\underset{(4)}{\rightarrow}-\frac{1}{4}a^{2}y^{2}+\frac{1}{9}a^{2}+b^{2}x^{2}-x^{2}\\ &\underset{(1)}{\rightarrow}-\frac{4}{9}a^{2}-4b^{2}x^{2}-b^{2}y^{2}+4x^{2}+y^{2}\\ &\underset{(1)}{\rightarrow}9b^{2}x^{2}+\frac{9}{4}b^{2}y^{2}-b^{2}-9x^{2}-\frac{9}{4}y^{2}+1\\ &\underset{(3)}{\rightarrow}\frac{1}{4}b^{2}y^{2}-\frac{1}{9}b^{2}+\frac{3}{2}bx^{2}y-x^{2}-\frac{1}{4}y^{2}+\frac{1}{9}\\ &\underset{(3)}{\rightarrow}-\frac{4}{9}b^{2}+6bx^{2}y+\frac{3}{2}by^{3}-4x^{2}-y^{2}+\frac{4}{9}\\ &\underset{(3)}{\rightarrow}-\frac{27}{2}bx^{2}y-\frac{27}{8}by^{3}+\frac{3}{2}by+9x^{2}+\frac{9}{4}y^{2}-1\\ &\underset{(3)}{\rightarrow}\frac{1}{4}by^{3}-\frac{1}{9}by+\frac{3}{2}x^{2}y^{2}-\frac{2}{3}x^{2}-\frac{1}{6}y^{2}+\frac{2}{27}\\ &\underset{(3)}{\rightarrow}-\frac{4}{9}by+6x^{2}y^{2}-\frac{8}{3}x^{2}+\frac{3}{2}y^{4}-\frac{2}{3}y^{2}+\frac{8}{27}\\ &\underset{(3)}{\rightarrow}-\frac{27}{2}x^{2}y^{2}+6x^{2}-\frac{27}{8}y^{4}+3y^{2}-\frac{2}{3}\\ &\underset{(4)}{\rightarrow}-\frac{4}{9}x^{2}-\frac{1}{9}y^{2}+\frac{4}{81}\\ &\underset{(4)}{\rightarrow}0.\\ \end{align*}
Cancelling the leading term of (2) by (4) and continuing reduction:
\begin{align*}-\frac{1}{3}a+x&\underset{(4)}{\rightarrow}-\frac{1}{4}ay^{2}+\frac{1}{9}a-3x^{3}\\ &\underset{(2)}{\rightarrow}-\frac{4}{9}a+12x^{3}+3xy^{2}\\ &\underset{(2)}{\rightarrow}-27x^{3}-\frac{27}{4}xy^{2}+3x\\ &\underset{(4)}{\rightarrow}0.\\ \end{align*}
Cancelling the leading term of (3) by (4) and continuing reduction:
\begin{align*}-\frac{2}{3}b+y&\underset{(4)}{\rightarrow}-\frac{1}{4}by^{2}+\frac{1}{9}b-\frac{3}{2}x^{2}y\\ &\underset{(3)}{\rightarrow}-\frac{4}{9}b+6x^{2}y+\frac{3}{2}y^{3}\\ &\underset{(3)}{\rightarrow}-\frac{27}{2}x^{2}y-\frac{27}{8}y^{3}+\frac{3}{2}y\\ &\underset{(4)}{\rightarrow}0.\\ \end{align*}
\end{quote}
\noindent$\vdots\hfill\vdots$
\vskip6pt
\hrule

\vskip12pt

Here the atomic steps in each operation between two lines were simply explained, that is,
from $a^2+b^2-1$ one gets $3ax+b^2-1$ by computing $(a^2+b^2-1)+3a\cdot\left(-\frac13 a+x\right)$,
and so on. The leading term can always be cancelled by subtracting some appropriate multiples
of the two polynomials. Due to lack of time this algorithm was not further explained,
but the equation (4) was identified as a valid solution of the problem and it was found to
be a formula for an ellipse.

The Chebyshev linkage was similarly described by equations (see Fig.~\ref{bb})
and computed fully.

\begin{figure}
\begin{center}
\includegraphics[width=0.7\textwidth]{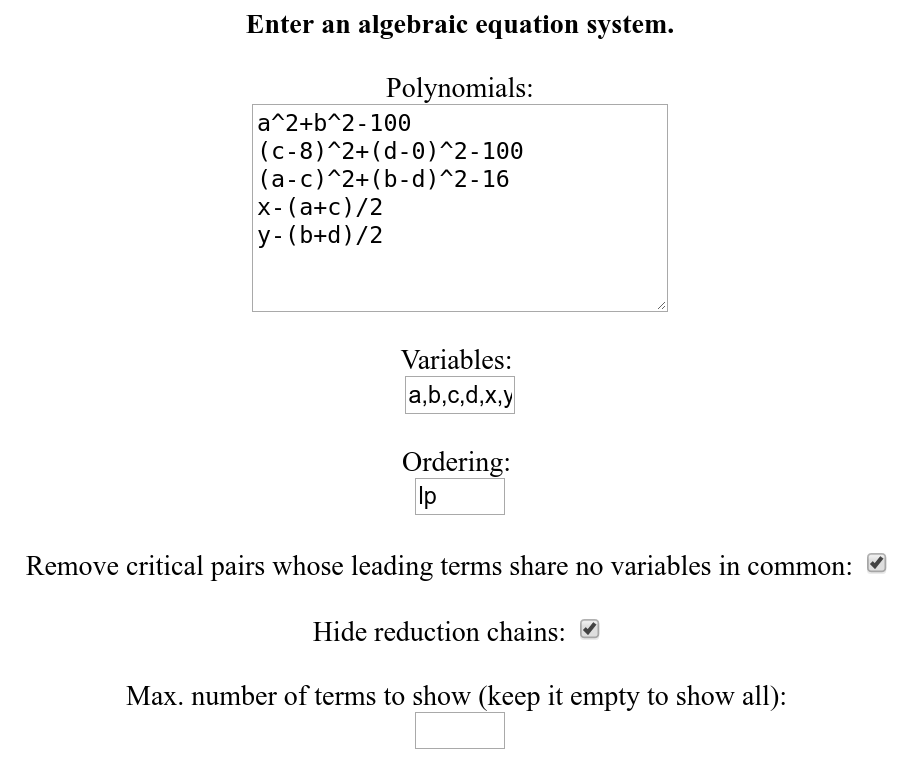}
\end{center}
\caption{Input form of the online software tool \texttt{buchberger-singular}}
\label{bb}
\end{figure}

The shocking amount of output lines and the magnitude of coefficients
confirmed the students that it is infeasible to do the same operations by hand. Luckily,
after 8 inserted equations one gets a polynomial in two variables:

\vskip12pt

\hrule
\noindent$\vdots\hfill\vdots$
\begin{quote}

The equation system is:
\begin{align*}a^{2}+b^{2}-100&= 0, &&(1)\\c^{2}-16c+d^{2}-36&= 0, &&(2)\\a^{2}-2ac+b^{2}-2bd+c^{2}+d^{2}-16&= 0, &&(3)\\-\frac{1}{2}a-\frac{1}{2}c+x&= 0, &&(4)\\-\frac{1}{2}b-\frac{1}{2}d+y&= 0. &&(5)\\\end{align*}
Cancelling the leading term of (1) by (3) and continuing reduction: $\ldots$

Equation added:
\begin{align*}cx-12c+dy-48&= 0. &&(6)\\\end{align*}
Cancelling the leading term of (1) by (4) and continuing reduction: $\ldots$

Equation added:
\begin{align*}c-\frac{1}{8}x^{2}-\frac{1}{8}y^{2}+8&= 0. &&(7)\\\end{align*}

Cancelling the leading term of (2) by (6) and continuing reduction: $\ldots$

Equation added:
\begin{align*}d^{2}x-12d^{2}-\frac{1}{8}dx^{2}y-\frac{1}{8}dy^{3}+24dy+6x^{2}-36x+6y^{2}-720&= 0. &&(8)\\\end{align*}

Cancelling the leading term of (2) by (7) and continuing reduction: $\ldots$

Equation added:
\begin{align*}d^{2}-\frac{1}{8}dxy-\frac{3}{2}dy+\frac{1}{64}x^{2}y^{2}-\frac{3}{4}x^{2}+6x+\frac{1}{64}y^{4}-\frac{7}{4}y^{2}+84&= 0. &&(9)\\\end{align*}

Cancelling the leading term of (6) by (7) and continuing reduction: $\ldots$

Equation added:
\begin{align*}dy+\frac{1}{8}x^{3}-\frac{3}{2}x^{2}+\frac{1}{8}xy^{2}-8x-\frac{3}{2}y^{2}+48&= 0. &&(10)\\\end{align*}

Cancelling the leading term of (8) by (10) and continuing reduction: $\ldots$

Equation added:
\begin{align*}dx^{4}-12dx^{3}+2dx^{2}y^{2}-64dx^{2}+384dx+dy^{4}-48dy^{2}\\
-\frac{3}{2}x^{2}y^{3}+24x^{2}y-288xy-\frac{3}{2}y^{5}+120y^{3}-2304y&= 0. &&(11)\\\end{align*}

Cancelling the leading term of (9) by (10) and continuing reduction: $\ldots$

Equation added:
\begin{align*}dx^{3}-12dx^{2}+2dxy^{2}-64dx+384d-\frac{1}{8}x^{2}y^{3}+6x^{2}y\\
-48xy-\frac{1}{8}y^{5}+14y^{3}-672y&= 0. &&(12)\\\end{align*}

Cancelling the leading term of (6) by (11) and continuing reduction: $\ldots$

Equation added:
\begin{align*}x^{7}y+3x^{5}y^{3}-560x^{5}y+24x^{4}y^{3}+2688x^{4}y+3x^{3}y^{5}\\
-1248x^{3}y^{3}+50176x^{3}y+48x^{2}y^{5}-172032x^{2}y+xy^{7}\\
-688xy^{5}+56064xy^{3}-1032192xy+24y^{7}\\
-2688y^{5}+18432y^{3}+3538944y&= 0. &&(13)\\\end{align*}
\end{quote}
\noindent$\vdots\hfill\vdots$
\vskip6pt
\hrule

\vskip12pt

The resulting equation is still of too high degree. Further operations can find a simpler polynomial:

\vskip12pt

\hrule
\noindent$\vdots\hfill\vdots$
\begin{quote}
Cancelling the leading term of (8) by (12) and continuing reduction: $\ldots$

Equation added:
\begin{align*}x^{6}-24x^{5}+3x^{4}y^{2}+16x^{4}-48x^{3}y^{2}+2304x^{3}+3x^{2}y^{4}\\
-96x^{2}y^{2}-5120x^{2}-24xy^{4}+2304xy^{2}\\
-49152x+y^{6}-112y^{4}+768y^{2}+147456&= 0. &&(17)\\\end{align*}
\end{quote}
\noindent$\vdots\hfill\vdots$
\vskip6pt
\hrule
\vskip12pt

This output can already be plotted in GeoGebra, and the graphical result confirms that no linear component exists:
by zooming in, the curve is slightly moving up and down, no straight vertical line can be observed.

A GeoGebra applet is available online at \url{https://www.geogebra.org/m/k4ywut2w}. Here GeoGebra's
\texttt{Eliminate} command was already explained and used, instead of the tool \cite{gh-buchberger-singular}.

The same idea was used to disprove the straight property of Watt's linkage. An online applet shows the details
at \url{https://www.geogebra.org/m/zvqb6nfc} (see also Fig.~\ref{Watt}).

\begin{figure}
\begin{center}
\includegraphics[width=0.9\textwidth]{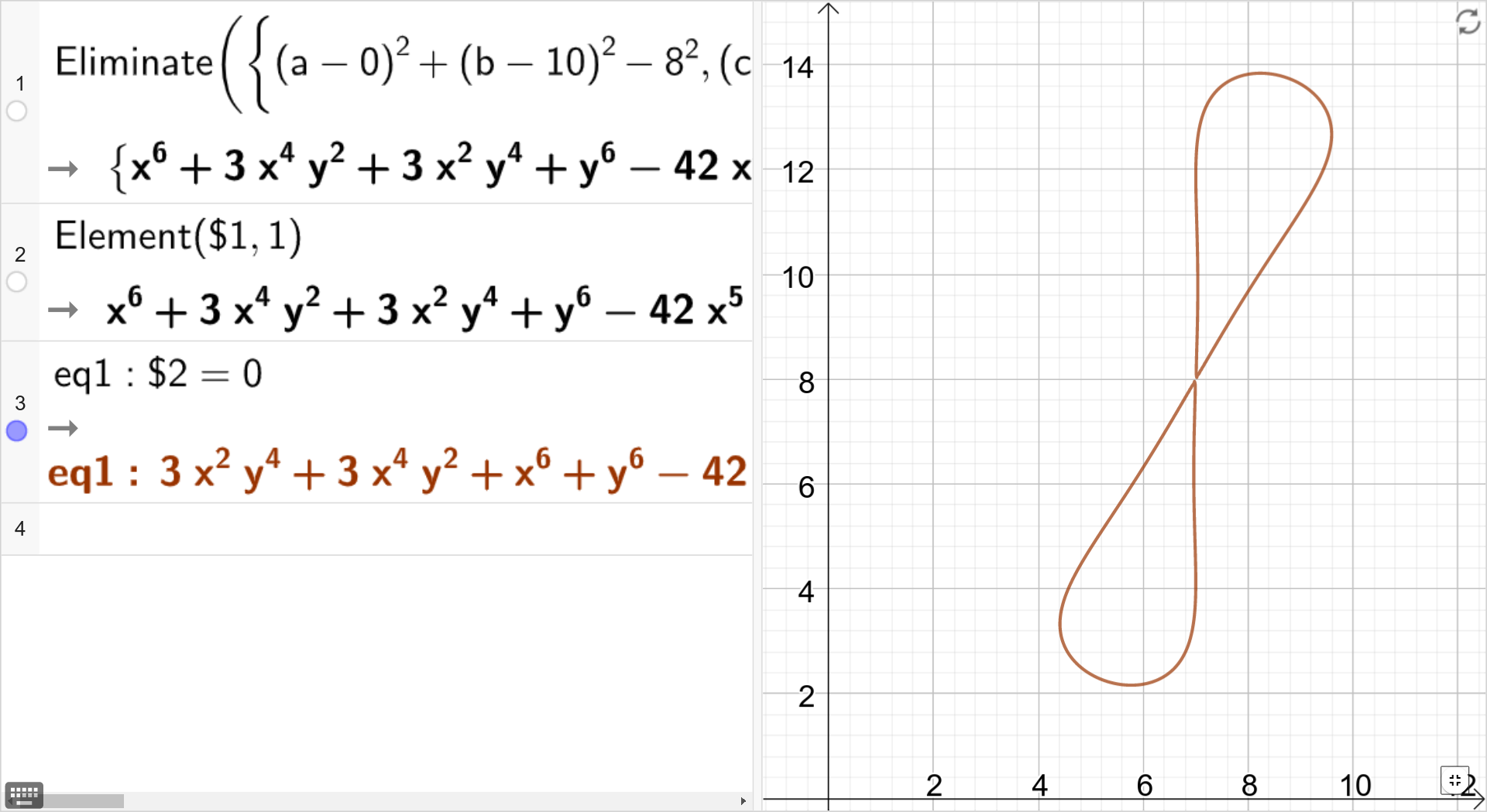}
\end{center}
\caption{Motion of Watt's linkage computed by GeoGebra's \texttt{Elimination} command}
\label{Watt}
\end{figure}

Here we silently assumed that GeoGebra's plotting capabilities were correct. However, it turned out that
on zooming in the curve of Watt's linkage, the crossing almost-straight parts are not drawn completely.
Therefore it was mentioned that for a full proof it should be shown that the obtained polynomials are atomic.

A linkage was to be constructed as homework: the \texttt{lambda} version of Chebyshev's linkage
(see \cite{gh-lego-linkages}).

\subsection*{Day 9}

The homework was to find the same result as Chebyshev's linkage. This was checked with
GeoGebra's \texttt{Elimination} command (\url{https://www.geogebra.org/m/hkpkqtfu}). Without proof the
following statement was highlighted:
\begin{theorem}\label{th-4bar}
A 4-bar linkage can produce either an atomic sextic movement, or a product of a quartic and a circular movement,
or a product of three circular movements.
\end{theorem}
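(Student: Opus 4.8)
\medskip\noindent\emph{Proof strategy (proposal).}\ The plan is to turn the linkage into a polynomial system, eliminate the coordinates of the two internal joints to obtain the defining polynomial $F(x,y)$ of the traced curve, bound its degree, and then read off the admissible factorisations from the behaviour of the curve at infinity.

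Put the fixed pivots at $A=(0,0)$ and $B=(\ell,0)$. Let the moving joints be $P_1=(a,b)$ with $a^2+b^2=r_1^2$ and $P_2=(c,d)$ with $(c-\ell)^2+d^2=r_2^2$, joined by the coupler bar $(a-c)^2+(b-d)^2=r_3^2$, and let the tracing point $C=(x,y)$ be rigidly attached to the coupler bar, i.e. a fixed similarity image of the segment $P_1P_2$: $x=a+\alpha(c-a)-\beta(d-b)$ and $y=b+\beta(c-a)+\alpha(d-b)$ for constants $\alpha,\beta$ (for Chebyshev's linkage $\alpha=\tfrac12$, $\beta=0$). Let $F(x,y)$ be a polynomial obtained by eliminating $a,b,c,d$ from these five equations; it defines the Zariski closure $\mathcal C$ of the locus of $C$.

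The degree bound is cleanest via the Roberts-type parametrisation: parametrise $P_1$ by the angle it makes at $A$, so $P_1$ runs over a circle; for each position $P_2$ is a root of a quadratic (the two assembly modes), so $P_2$, and hence $C$, is a rational function of one parameter $t$ and one square root $\sqrt{D(t)}$ of controlled degree, and eliminating $t$ shows $\deg\mathcal C\le 6$. The same is what the elimination of $a,b,c,d$ delivers once the extraneous factors are stripped off. The decisive structural point is that every constraint is a circle, with degree-$2$ part $x^2+y^2$; consequently $\mathcal C$ is a \emph{tricircular sextic} — in the non-degenerate case the leading form of $F$ is $\lambda\,(x^2+y^2)^3$ — so $\mathcal C$ meets the line at infinity only at the circular points $I=(1:i:0)$ and $J=(1:-i:0)$, each with multiplicity $3$. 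This is classical (cf.~\cite{bryantsangwin}); it can also be checked directly by homogenising the system and inspecting the fibre over the line at infinity.

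Given this, the classification is combinatorial. Write $F=\prod_i F_i$ with the $F_i$ real irreducible. Since $V(F)$ meets the line at infinity only at $I$ and $J$, so does every $V(F_i)$; hence the leading form of $F_i$ is a real homogeneous form in $x,y$ vanishing only in the directions $1:\pm i$, i.e. a multiple of $(x^2+y^2)^{m_i}$. Thus \emph{every real factor has even degree} $2m_i$ — which excludes any linear, cubic or quintic factor outright — and a factor of degree $2$ has equation $\lambda(x^2+y^2)+Dx+Ey+G=0$ with $\lambda\neq0$, i.e. it is a circle. Since $\sum_i 2m_i=\deg F\le 6$, the only partitions are $6$, $4+2$ and $2+2+2$: an atomic (irreducible) sextic; an irreducible bicircular quartic times a circle; and a product of three circles — exactly the trichotomy of the statement (the lower-degree possibilities $\deg F\in\{2,4\}$, which occur only for special parameter values, are degenerate instances of the last two). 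The hard part is the degree bound together with pinning down the leading form: a raw resultant computation produces extraneous factors — from the two assembly modes of $P_2$ coalescing, from $P_1$ degenerating, and from the projection itself — that have to be identified and shown not to lie on $\mathcal C$, and one must check that no elimination step lowers the generic degree below $6$; once the tricircular sextic is established, the factor analysis above is routine.
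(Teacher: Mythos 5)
The paper contains no proof of this theorem, so there is nothing to match your argument against: the statement is explicitly ``highlighted without proof'' in class, supported only by the Day 6--7 experiments, and the Conclusion concedes that ``a formal proof of Theorem \ref{th-4bar} can be very difficult to find,'' deferring to \cite{barker} and \cite{muller}. Your proposal therefore supplies something the paper deliberately omits, and its second half is genuinely correct and clean: \emph{granted} that the eliminated polynomial $F$ is a sextic whose leading form is $\lambda(x^2+y^2)^3$, every real irreducible factor $F_i$ has a leading form vanishing only in the directions $1:\pm i$, reality forces that form to be $(x^2+y^2)^{m_i}$, so every factor has even degree, the degree-$2$ factors are circles, and the only partitions of $6$ into even parts are $6$, $4+2$ and $2+2+2$ --- exactly the stated trichotomy. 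That parity-at-infinity argument is a real contribution beyond anything in the paper.

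The gap is that the entire load is carried by the one claim you do not prove: that the coupler curve of a generic 4-bar linkage is a tricircular sextic, i.e.\ that the elimination yields (after discarding extraneous components) a polynomial of degree exactly $6$ meeting the line at infinity only at $I=(1:i:0)$ and $J=(1:-i:0)$, each with multiplicity $3$. You cite this as classical and candidly call it ``the hard part,'' but it is precisely the nontrivial kinematic/algebraic input --- controlling the extraneous factors from the two assembly modes, showing the generic degree does not drop below $6$, and pinning the leading form --- and the paper's own assessment is that this is where the difficulty lies. As written, your argument is a correct \emph{conditional} reduction of the theorem to a classical fact, not a self-contained proof. Two smaller points: the degenerate cases with $\deg F\in\{2,4\}$ are not literally ``instances of the last two'' alternatives (a single circle is not a product of three circles), so the trichotomy as stated needs a genericity caveat; and one should say explicitly that the traced point in the paper's linkages (midpoint of the coupler) is a special case of your rigidly attached coupler point, so your more general setup does cover the intended statement.
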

Some former experiments from Days 6 and 7 were here recalled as an argument.

As a consequence we concluded that
\begin{theorem}\label{th-4bar2}
A 4-bar linkage cannot produce a straight line motion.
\end{theorem}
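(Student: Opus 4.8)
The plan is to argue by contradiction, chaining together Theorem~\ref{th-4bar} with the two earlier results about straight segments and linear factors. Suppose some 4-bar linkage did produce a straight line motion. The traced locus is contained in the algebraic curve $p=0$, where $p$ is the motion polynomial obtained by elimination (as in the Day~8 computations), so that curve would contain a straight segment. By Theorem~\ref{Allison-sorrow} the curve $p=0$ then contains a whole line $\ell$, and by Theorem~\ref{nss} the polynomial $p$ has a linear factor; write $p=\ell(x,y)\cdot g(x,y)$ with $\deg\ell=1$. The goal is now to show this is impossible, no matter which linkage we started from.

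To do that I would invoke Theorem~\ref{th-4bar}, which restricts $p$ to exactly three shapes: an atomic sextic, a product of an atomic quartic and a circle, or a product of three circles. I would then check case by case that such a $p$ cannot have a linear factor. If $p$ is an atomic (irreducible) sextic, then by definition it is not a product of a degree-$1$ and a degree-$5$ polynomial, so $\ell\nmid p$ — contradiction. In the other two cases $p$ is a product of irreducible factors each of degree $2$ or $4$: every circular movement is a genuine, non-degenerate circle (it is a one-dimensional real curve, hence not the ``point'' or ``empty set'' case of Theorem~\ref{quad-class}, and a short leading-coefficient argument in the spirit of the Day~5 proof for $x^3+y^2-17$ rules out the ``two lines'' case), so it is irreducible of degree $2$; and the quartic factor is atomic by hypothesis. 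By uniqueness of factorization into irreducibles, $\ell$ would have to coincide (up to scalar) with one of these factors, all of which have degree $\geq 2$ — contradiction. Hence no 4-bar linkage can trace a straight line.

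The main obstacle I expect is not the logical skeleton but making Theorem~\ref{th-4bar}'s trichotomy actually bite: one must be sure that a ``quartic movement'' really is an \emph{atomic} quartic and that a ``circular movement'' is an honest circle rather than a degenerate conic, since otherwise a hidden linear factor could sneak into one of the pieces. For the circle this is dispatched by Theorem~\ref{quad-class} plus the one-dimensionality of the locus; for the quartic it relies on the irreducibility over $\mathbb{Q}$ of the quartic factor arising from a 4-bar linkage, which is exactly what the experiments of Days~6--7 suggest but which would need to be nailed down for a fully rigorous argument. A slightly weaker but self-contained alternative, appropriate at this level, is the numerical route already used on Day~8: compute an explicit sextic $p$ for the linkage, plot $p=0$ in GeoGebra, zoom in on the almost-straight arc, and observe that it wobbles and is never a vertical line; but since that only disproves straightness for the linkages actually computed, the factor-degree argument above is the one I would carry out to cover \emph{all} 4-bar linkages.
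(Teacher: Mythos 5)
Your argument is essentially the one the paper intends: Theorem~\ref{th-4bar2} is presented there simply as ``a consequence'' of the trichotomy in Theorem~\ref{th-4bar}, and your chain through Theorems~\ref{Allison-sorrow} and \ref{nss} spells out exactly why none of the three admitted shapes of the coupler curve can carry a linear factor. You also rightly flag that everything rests on the unproven Theorem~\ref{th-4bar} and on the atomicity of its quartic and circular factors --- a dependence the paper itself concedes in its conclusion is very difficult to make rigorous.
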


So the previous linkages are useless in our attempts. Therefore we need a different number of bars.
As a possible solution, Hart's inversor (\texttt{HartI}, see \cite{gh-lego-linkages}) was to be built as a LEGO activity, and to find an equation system
that describes it. A student found all but one (that is, 10) equations for the 12 variables. The homework
was to find the missing equation.

Also, another homework assignment was to build Hart's A-frame (\texttt{HartA2b}, see \cite{gh-lego-linkages}) and try to draw a straight line with it.

To get prepared for an ultimate linkage that produces a straight line motion, circle inversion was introduced.

\begin{definition}
Given a reference circle with center $O$ and radius $R$. The map of point $P$ is $P'$ if the points $O,P,P'$
are collinear and for the lengths $OP\cdot OP'=R^2$ holds.
\end{definition}

The ``most basic property'' of a circle inversion is that by doing it twice the identical map appears.
To learn some other basic properties of circle inversion, the idea \cite{aplimat-inv} was used. In particular,
a T-shirt that has horizontal stripes was used to illustrate the following statement:

\begin{theorem}\label{th-inv-lines}
A circle inversion maps a set of horizontal lines into a union of
\begin{itemize}
\item all those circles whose top/bottom point goes through then center of the reference circle,
\item a horizontal line that goes through the center of the reference circle.
\end{itemize}
\end{theorem}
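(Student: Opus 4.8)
The plan is to place the center $O$ of the reference circle at the origin of a Cartesian system, so that the defining conditions (collinearity of $O$, $P$, $P'$, with $P'$ on the ray from $O$ through $P$, and $OP\cdot OP'=R^2$) translate into the explicit formula
\[
(x,y)\;\longmapsto\;\Bigl(\tfrac{R^2x}{x^2+y^2},\ \tfrac{R^2y}{x^2+y^2}\Bigr)
\]
for every $P=(x,y)\neq O$. I would then take an arbitrary horizontal line $\ell_k\colon y=k$ and follow its generic point $(t,k)$, whose image $(u,v)$ satisfies $u=R^2t/(t^2+k^2)$ and $v=R^2k/(t^2+k^2)$. The degenerate case $k=0$ is immediate: the line $\ell_0$ runs through $O$, the point $(t,0)$ goes to $(R^2/t,0)$, and $\ell_0$ is mapped onto itself; this accounts for the single horizontal line in the statement.

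For $k\neq 0$ the decisive step is to eliminate the parameter $t$. From $u^2+v^2=R^4/(t^2+k^2)$ one gets $v=\tfrac{k}{R^2}(u^2+v^2)$, hence the image lies on the curve
\[
u^2+v^2-\frac{R^2}{k}\,v=0,
\qquad\text{i.e.}\qquad
u^2+\Bigl(v-\frac{R^2}{2k}\Bigr)^2=\frac{R^4}{4k^2}.
\]
This is the circle with center $(0,R^2/(2k))$ and radius $R^2/(2|k|)$; it passes through $O$, and because its center lies on the vertical axis the tangent at $O$ is horizontal, so $O$ is precisely the bottom point of the circle when $k>0$ and the top point when $k<0$. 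A short check --- using that inversion is an involution (the ``most basic property'' noted above), so that the inverse image of this circle is again $\ell_k$ --- shows that the image of $\ell_k$ is all of this circle except $O$ itself. Letting $k$ range over the nonzero reals, the center $(0,R^2/(2k))$ sweeps out every point of the vertical axis other than $O$ and the radius every positive value, so the circles obtained are exactly those whose top or bottom point is $O$. Combined with the case $k=0$, this is the union claimed in the theorem, and any sub-collection of horizontal lines (e.g.\ the equally spaced ``stripes'') maps to the corresponding sub-collection of these circles.

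There is no deep obstacle here, but a couple of points deserve care and would be stated explicitly. First, the line $\ell_k$ with $k\neq 0$ contains no point that maps to $O$, so strictly speaking its image is a punctured circle; one should adopt the usual convention of extending the inversion to the one-point compactification, so that the ``point at infinity'' of $\ell_k$ maps to $O$, which is exactly what makes the image the full circle. Second, ``a set of horizontal lines'' should be read as the full pencil of horizontal lines, so that the target is the whole family of circles tangent at $O$ to the horizontal line through $O$, together with that line; this is the content of the T-shirt picture used in class. An alternative, more conceptual route would first prove the general fact that inversion carries a line not through $O$ to a circle through $O$ and fixes every line through $O$, and then merely observe that horizontal lines go to circles tangent to the $x$-axis at $O$; but since that general fact is itself proved by the same parameter elimination, the direct computation above is the most economical.
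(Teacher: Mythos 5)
Your computation is correct, and it actually delivers more than the paper does: in the course the theorem is only illustrated visually, a special case (the line tangent to the reference circle) is assigned as homework, and the intended argument is synthetic --- ``by using similar triangles and Thales' circle theorem the statement is clear.'' That synthetic route goes as follows: if $A$ is the foot of the perpendicular from $O$ to the horizontal line $\ell$ and $A'$ is its inverse, then for any $P\in\ell$ with inverse $P'$ the relation $OA\cdot OA'=OP\cdot OP'=R^2$ gives $OA/OP=OP'/OA'$, so the triangles $OAP$ and $OP'A'$ are similar; hence the angle at $P'$ is right and $P'$ lies on the Thales circle with diameter $OA'$, whose top/bottom point is $O$ because $OA'$ is vertical. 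Your coordinate elimination reaches the same circle $u^2+v^2-\tfrac{R^2}{k}v=0$ directly, handles the degenerate line $k=0$, identifies exactly which circles arise as $k$ varies, and is explicit about the punctured point at $O$ (the image of the ``point at infinity''), none of which the paper spells out. What the synthetic version buys is accessibility to the target audience (11-year-olds without analytic geometry of circles); what yours buys is completeness and a uniform treatment of all cases. Both are valid proofs of the statement.
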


A GeoGebra applet (available at \url{https://www.geogebra.org/m/kqta2rwa}) was used to have
a visual overview on this theorem. A homework was to prove the statement in that case when the line touches
the reference circle.

A second preparation for the final topic was to find an algebraic
equation to describe Agnesi's witch. Google's doodle
\url{https://www.google.com/doodles/maria-gaetana-agnesis-296th-birthday}
on Agnesi's 296th birthday was a visual introduction to this challenge.
By using the Intercept theorem a student managed to find an algebraic equation
(see \url{https://www.geogebra.org/m/zqbtzv5y}), but after elimination an
unwanted degenerate component also appeared. To avoid this, one finds Rabinowitsch's trick useful:

\begin{theorem}\label{rab}
The inequation $a\neq0$ is equivalent to the equation $a\cdot t-1=0$. 
\end{theorem}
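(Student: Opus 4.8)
The plan is to read the statement in the way it is actually used on Day~9: here $a$ stands for a polynomial expression in the geometric variables, $t$ is a fresh auxiliary variable, and ``equivalent'' means that an assignment of values to the geometric variables makes $a$ evaluate to something nonzero \emph{if and only if} that assignment can be extended by some value of $t$ satisfying $a\cdot t-1=0$. With that reading fixed, the proof is a short two-direction argument, carried out over the field in which the coordinates live (in the course, the reals).

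First I would prove the forward direction. Assume $a\neq0$. Since we work over a field, $a$ has a multiplicative inverse, so put $t:=1/a$. Then $a\cdot t-1=a\cdot(1/a)-1=1-1=0$, which exhibits an explicit witness showing that the equation $a\cdot t-1=0$ is satisfiable. Next I would prove the converse. Suppose $a\cdot t-1=0$ for some value of $t$, i.e.\ $a\cdot t=1$. If $a$ were $0$, the left-hand side would be $0\cdot t=0$, forcing $1=0$, a contradiction; hence $a\neq0$. Combining the two implications gives the claimed equivalence.

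The one genuinely delicate point is not the algebra, which is a single line in each direction, but the bookkeeping of what is being asserted: the \emph{inequation} $a\neq0$ on the original variables is traded for a polynomial \emph{equation} at the cost of one extra variable $t$, which is then eliminated. In the language of the elimination technique introduced on Day~8, adjoining $a\cdot t-1=0$ to the equation system and eliminating $t$ removes exactly the degenerate branch on which $a=0$, since on that branch the new equation has no solution. So immediately after stating and verifying the trick I would point out that this is precisely why it cures the unwanted degenerate component that surfaced when describing Agnesi's witch.

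Finally, I would add two small remarks that cost nothing. The argument uses only that a nonzero $a$ is invertible, so the same trick is valid over any field, not just the reals; and the auxiliary variable makes the system live one dimension higher before the projection (elimination of $t$) brings it back down, which is the reason the method fits so naturally into the Gr\"obner-basis workflow already demonstrated in the course.
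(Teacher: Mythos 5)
Your proof is correct and matches the paper's own argument: both directions rest on exactly the same two observations (take $t=1/a$ as an explicit witness when $a\neq0$, and note that $a=0$ forces $a\cdot t-1=-1\neq0$, which you phrase contrapositively). The extra commentary about the elimination context is consistent with how the paper uses the trick but adds nothing beyond the paper's one-line proof.
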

In other words,
the inequation $a\neq0$ has a solution if and only if the equation $a\cdot t-1=0$ has a solution.
\begin{proof}
If $a\neq0$ then $t=1/a$. Otherwise if $a=0$ then $a\cdot t-1=0-1=-1\neq0$.
\end{proof}

By using Rabinowitsch's trick one can avoid the unwanted degenerate component as well
(see \url{https://www.geogebra.org/m/n8pxeaep} for a GeoGebra applet).

\subsection*{Day 10}

The missing equation in the algebraic description of Hart's inversor was inserted
and a polynomial $p$ was obtained by elimination. $p$ could be drawn in GeoGebra
and a linear component could be observed visually. Students discovered that
by using Theorem \ref{th-factors}, it can be checked if the linear component is
indeed included or not---the first suggestions were, however, to zoom out or zoom
in the graph to check the straight property from a distant or a closer perspective.
(See \url{https://www.geogebra.org/m/j5js3xx7}.)

The construction homework of Hart's A-frame was discussed quickly and a larger
variant of it was presented to the students to try out individually.
This activity was, however, technically difficult to perform and most students
simply skipped the individual check. A computer based check was also performed
with the students' help (\url{https://www.geogebra.org/m/smhm8mpc}). The long
division step was initiated by the students themselves for both linkages.

After these introductory activities the proof of a special case of Theorem \ref{th-inv-lines} was discussed
and given for homework: by using similar triangles and Thales' circle theorem the
statement is clear.

Then a converse of Theorem \ref{th-inv-lines} was stated:
\begin{theorem}\label{th-inv-circles}
A circle inversion maps all circles that go through the center of the reference
circle into a line.
\end{theorem}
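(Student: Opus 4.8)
The plan is to obtain this converse from the forward statement (Theorem~\ref{th-inv-lines}) together with the ``most basic property'' that circle inversion is an involution. Write $\iota$ for the inversion in the reference circle with center $O$ and radius $R$. Since $\iota\circ\iota=\mathrm{id}$, it is enough to exhibit, for an arbitrary circle $c$ passing through $O$, a line $\ell$ with $\iota(\ell)=c$; then $\iota(c)=\iota(\iota(\ell))=\ell$, a line, and we are finished.

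First I would reduce to the configuration already treated. Theorem~\ref{th-inv-lines} speaks only of horizontal lines, while a general circle $c$ through $O$ is ``horizontal'' in no particular sense; the feature of $c$ that matters is the direction of the diameter of $c$ issuing from $O$ (equivalently, the tangent of $c$ at $O$). So I would rotate the whole picture about $O$ until that diameter is vertical, i.e.\ until the tangent to $c$ at $O$ is horizontal. This rotation changes nothing essential, because inversion commutes with every rotation centered at $O$: such a rotation fixes $O$, carries each ray from $O$ to a ray from $O$, and preserves distances from $O$, hence preserves the relation $OP\cdot OP'=R^2$. Thus it suffices to prove the claim for the rotated circle.

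Now $c$ is a circle through $O$ one of whose top/bottom points is $O$ — one endpoint of its vertical diameter is $O$. By Theorem~\ref{th-inv-lines}, $\iota$ carries the family of horizontal lines onto exactly the family of such circles (together with the single horizontal line through $O$); in particular $c$ arises as $\iota(\ell)$ for some horizontal line $\ell$. Concretely, if $c$ meets its vertical diameter again at the point at height $L$ from $O$, one checks straight from the definition that the horizontal line at height $R^2/L$ inverts to $c$, so $\ell$ exists and is explicit. Applying $\iota$ once more gives $\iota(c)=\ell$, and $\ell$ is a line \emph{not} through $O$, consistent with the fact that $O$ itself has no image (it ``goes to infinity''), so strictly the image of $c$ is $\ell$ with one point deleted.

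The step I expect to be the main obstacle is the surjectivity hidden in the previous paragraph: I am using that \emph{every} circle through $O$ with horizontal tangent at $O$ actually occurs as the image of a horizontal line, not merely that images of horizontal lines lie among such circles. If one is reluctant to read Theorem~\ref{th-inv-lines} that strongly, the cleanest remedy is a direct algebraic check suitable for the classroom: put $O$ at the origin, write $c$ as $x^2+y^2+Dx+Ey=0$ (no constant term, as $O\in c$), substitute the involutive formula $(x,y)=\frac{R^2}{x'^2+y'^2}\,(x',y')$, and simplify; both $x^2+y^2$ and $Dx+Ey$ acquire the common denominator $x'^2+y'^2$, which cancels after clearing it, leaving $R^2+Dx'+Ey'=0$ — manifestly a line, and one avoiding $O$, which also exhibits the ``$O\mapsto\infty$'' phenomenon.
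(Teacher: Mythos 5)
Your proof is correct and follows essentially the same route as the paper's: rotate about $O$ so the circle's tangent at $O$ is horizontal, invoke Theorem~\ref{th-inv-lines} to realize the rotated circle as the image of a horizontal line, apply the involution property of inversion, and undo the rotation. You go a bit further than the paper by explicitly justifying the surjectivity step (exhibiting the line at height $R^2/L$) and by supplying an algebraic cross-check, both of which tighten a point the paper's proof passes over implicitly.
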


\begin{proof}
Consider a circle $c$ that goes through the center $O$ of the reference circle.
There exists a line $\ell$ through $O$ that touches $c$. Rotate $\ell$ about $O$ to be horizontal
and rotate $c$ as well by the same angle. Now we have a line $\ell'$ and $c'$ after the rotation.
Clearly, $c'$ is a circle whose top/bottom point goes through the center of the reference circle.
By using Theorem \ref{th-inv-lines} there is a line $m$ that is mapped into $c'$. Therefore, because
of the ``most basic property'' of circle inversion, $c'$ is mapped into $m$. Since rotation
around the reference circle requires only a reverse rotation if a circle inversion was performed,
the map of $c$ must be a line, namely a reverse rotation of $m$ around $O$.
\end{proof}

At this point a linkage was built that was called \texttt{P-cell} (Fig.~\ref{pcell}). In fact, it referred to
Peaucellier's cell that can perform a circle inversion on two points that are not fixed
and not connected to three bars. The inversor property of Peaucellier's cell is well-known,
for example, see \url{https://en.wikipedia.org/wiki/Peaucellier%E2%80%93Lipkin_linkage#Inverse_points}
for a simple proof. During the course the same idea was explained.

\begin{figure}
\begin{center}
\includegraphics[width=0.9\textwidth]{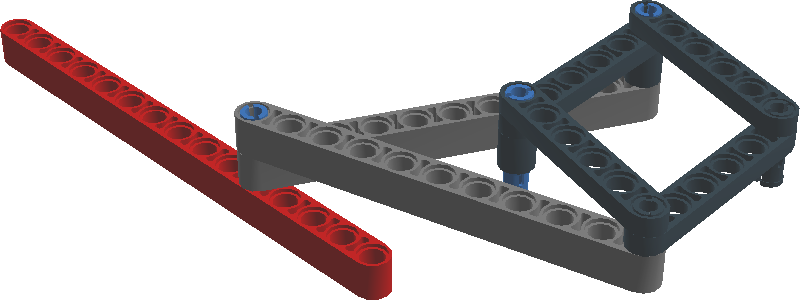}
\end{center}
\caption{Peaucellier's cell}
\label{pcell}
\end{figure}

A quiz question was raised in the classroom: How to extend the P-cell to obtain an exact
straight linear motion? A few students managed to solve this question by trial:
a bar with length 4 should be put on the right end of the long red bar and it should
connect one of the mutually mapped points to it (see Fig.~\ref{pcell-s}). Technically, it will constrain
the connected point to run on a circle. If the circle is going through the center
of the reference circle, by using Theorem \ref{th-inv-circles}, clearly the opposite point
of the rhombus will be running on a line.

\begin{figure}
\begin{center}
\includegraphics[width=0.6\textwidth]{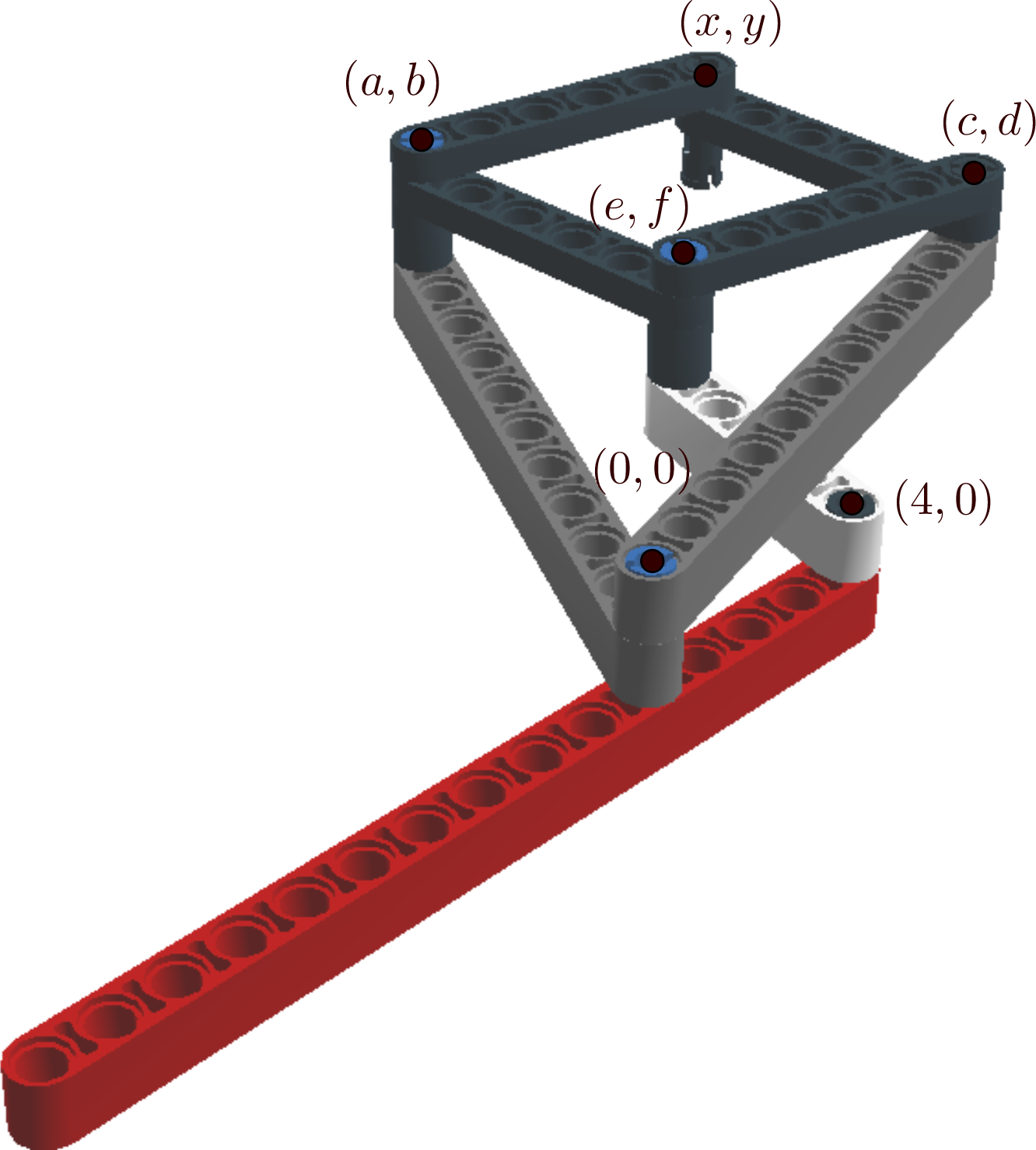}
\end{center}
\caption{The Peaucellier--Lipkin linkage}
\label{pcell-s}
\end{figure}

By setting up an equation system, however, the validity of this result cannot be directly
checked. That is, let us consider the equation system
\begin{align}
(e-4)^2+f^2-4^2=0,\\
(e-c)^2+(f-d)^2-5^2=0,\\
(c-x)^2+(d-y)^2-5^2=0,\\
(x-a)^2+(y-b)^2-5^2=0,\\
(a-e)^2+(b-f)^2-5^2=0,\\
a^2+b^2-10^2=0,\\
c^2+d^2-10^2=0.
\end{align}
By eliminating all variables but $x$ and $y$ no useful output will be obtained---the reason is that
the linkage moves ``too freely'' unless we consider some non-degeneracy conditions, namely, we
add $(e,f)\neq(x,y)$ and $(a,b)\neq(c,d)$. Thus, by adding the equations
\begin{align}
t((e-x)^2+(f-y)^2)-1=0,\\
u((a-c)^2+(b-d)^2)-1=0
\end{align}
we will obtain the equation $8x-75=0$
that clearly describes a straight line (see \url{https://www.geogebra.org/m/kehsjxws} for details).

\section{Conclusion}

Fig.~\ref{adj2} shows some student feedback after the second week. The main outcome of the course seemed
positive. However, there were some issues that could be improved in a future course.
\begin{figure}
\begin{center}
\includegraphics[width=0.9\textwidth]{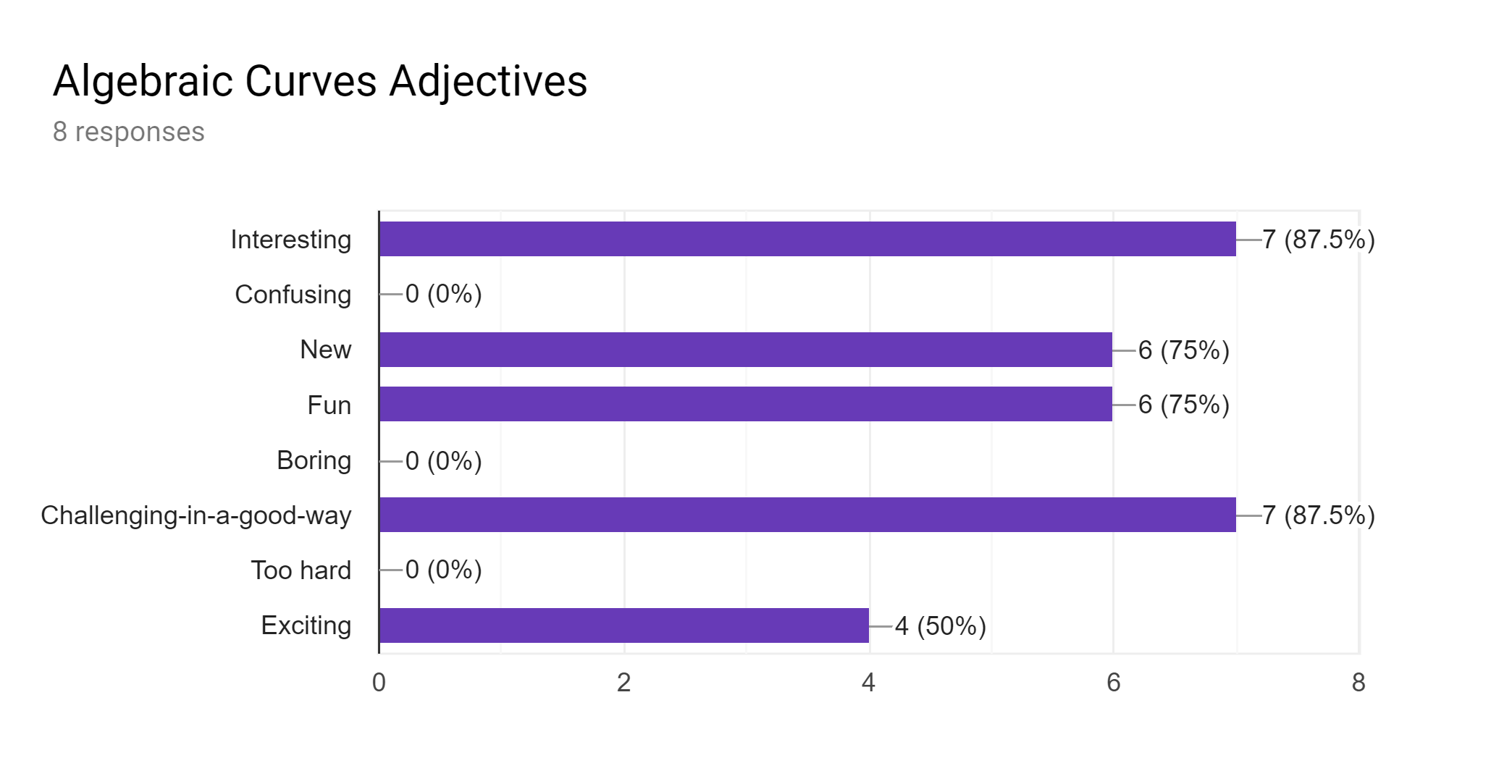}
\end{center}
\caption{Some responses of the students after the second week}
\label{adj2}
\end{figure}

First of all, drawing by utilizing the linkages was much more difficult for the students than it was expected.
The long red beam requires to stay in a fixed position while other parts of the linkage are moving. This
was difficult to perform for several students. As a result, the drawings were very inaccurate sometimes.

On the other hand, two-variate polynomial division was difficult for many students, even though it was
a prerequisite for entering the Gauss Level. In particular, the Algebra Assessment had included
a question on finding $(6x^5-x^4+3x^3+4x^2-x-1)/(3x^2+x-1)$.

In fact, Theorem \ref{nss} was not directly used during the course, so the whole explanation about
term orderings could have been omitted; however, they were still mentioned in the Buchberger algorithm
(because different term orderings can lead to completely different results). To see any purpose
of Theorem \ref{nss}, one should get an example of an irreducible higher degree polynomial that
seems to have a linear component. Both Chebyshev's and Watt's linkages could be such an example.
A simple way to show that, for example, Watt's linkage does not have a linear factor is to divide
the corresponding polynomial by $x-7$ and check that the remainder is not zero. Actually, by plugging $x=7$
in the polynomial we can directly use the proof of Theorem \ref{Allison-sorrow} and we can see the imperfection even
faster (see Fig.~\ref{Watt}).

Surprisingly enough, the students were not really interested in getting an automated result for
factoring a polynomial. GeoGebra's \texttt{Division} command was enough for all purposes. (Note that
the students did not have any computer access during the classroom hour except the teacher's,
and in general they did not have any laptops at camp.) Also, GeoGebra's \texttt{LocusEquation} command
was not used directly by the students---it appeared only in one applet when 4-bar linkages were
illustrated and the equation of their motion was automatically obtained.

At this point we need to admit that a formal proof of Theorem \ref{th-4bar} can be very difficult to find.
One approach can be to have different experiments on changing various parameters of the linkage
(including the positions of the bars) and classify the appearing curves after partitioning
the parameter space with the help of continuity. A similar idea is used in \cite{barker}.
Another approach can be found in \cite{muller}. In fact, there is a high variety of geometry of
the appearing coupler curves: the book \cite{4bar-atlas} illustrates more than
seven thousand different forms. % Further references can be found in \cite{bryantsangwin}.

Also, we need to emphasize that collecting experience on the algebraic form
of the coupler curves is very helpful in finding conjectures, even if formal proofs are
hard to achieve. For this purpose GeoGebra's \texttt{LocusEquation} command
could be an adequate tool. Learning and conjecturing that planar 4-bar linkages
always produce a sextic curve is a straightforward way by dragging the free points
in a dynamic geometry application.

Today's advanced mathematics knows a lot on how to directly compute the coupler curve by
using just the parameters of the linkages---the formulas are very complicated, though, see \cite{baiangeles}
for a recent study. But even if all coefficients of the sextic curve can be explicitly
computed, factorization of the achieved polynomial
cannot be easily performed. Therefore, nothing can be concluded on non-existence of linear factors.
In fact, further study of 4-bar linkages
will likely remain a field of active research for the coming decades, especially, since motion planning in robotics
has great importance and has direct connections with today's STEM education challenges.

\section{Acknowledgments}
The author is thankful to Tom\'as Recio for his suggestions on finalizing
the mathematical background of the course. Emily Castner kindly collected and
processed the students' feedback. Aryeh Stahl, counselor of the group of children,
gave useful feedback on the students' work. The author's ideas were supported
by several colleagues, including Csaba Biro, Beatrix Hauer, Andreas Kiener,
Chris Sangwin, Susanne Thrainer and M.~Pilar V\'elez. Some useful feedback on the
first versions of this paper was
given by Allison Hung and Noah Mok. Special thanks to Jonathan Yu for several
suggestions that improved the text substantially.

Tom Edgar suggested considering the paper \cite{groebner-nim} that attempts
introducing the Buchberger algorithm by using a solitaire game. In a future version
of the course this idea could be indeed used to learn more about the basics of the
theory of Gr\"obner bases.

The project was partially supported by a grant MTM2017-88796-P from the
Spanish MINECO (Ministerio de Economia y Competitividad) and the ERDF
(European Regional Development Fund).

\bibliography{kovzol,external}

\end{document}